\theoremstyle{plain}
\newtheorem{lemma}{Lemma}
\newtheorem{proposition}[lemma]{Proposition}
\newtheorem{thm}[lemma]{Theorem}
\newtheorem{theorem}[lemma]{Theorem}
\newtheorem{definition}[lemma]{Definition}
\newtheorem{corollary}[lemma]{Corollary}
\numberwithin{equation}{section}
\newtheorem{theoremA}{Theorem}[section]
\newtheorem{propositionA}[theoremA]{Proposition}
\newtheorem{lemmaA}[theoremA]{Lemma}
\theoremstyle{remark}
\theoremstyle{definition}
\newtheorem{remark}[lemma]{Remark}
\newtheorem{example}[lemma]{Example}
\def\mB{\mathfrak{m}_{_B}}
\newtheorem*{ack}{Acknowledgements}
\begin{document}
\title{Static potentials and area minimizing hypersurfaces}
\author{Lan-Hsuan Huang}
\address{Department of Mathematics, University of Connecticut, Storrs, CT 06269, USA }
\email{lan-hsuan.huang@uconn.edu}
\author{Daniel Martin}
\address{Department of Mathematics, University of Connecticut, Storrs, CT 06269, USA }
\email{daniel.martin@uconn.edu}
\author{Pengzi Miao}
\address{Department of Mathematics, University of Miami, Coral Gables, FL 33146, USA}
\email{pengzim@math.miami.edu}
\thanks{The first two authors were partially supported by the NSF through grant DMS~1452477. The third author was partially supported by Simons Foundation Collaboration Grant for Mathematicians \#281105.}
\begin{abstract} 
We show that if an asymptotically flat manifold with horizon boundary admits a global static potential, then the static potential must be zero on the boundary. We also show that if an asymptotically flat manifold with horizon boundary admits an unbounded static potential in the exterior region, then the manifold must contain a complete non-compact area minimizing hypersurface. Some results related to the Riemannian positive mass theorem and Bartnik's quasi-local mass are obtained. 
\end{abstract}
\maketitle

The purpose of this paper is to study the interplay between  static potentials  and minimal hypersurfaces of an asymptotically flat  manifold. 

We state the main results. See Appendix~\ref{section:asymptotically-flat} and Appendix~\ref{section:static} for precise statements of terms used below.

\begin{theorem}\label{theorem:bounded}
Let $n\ge 3$. Let $(M, g)$ be an $n$-dimensional asymptotically flat manifold with horizon boundary. Suppose $(M, g)$ admits a static potential $V$. Then $V$ is zero on $\partial M$.  

As a direct consequence, if $V$ is bounded, then V is either positive or negative everywhere in the interior of M.
\end{theorem}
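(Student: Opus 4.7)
The plan is to argue by contradiction: assume $V$ is not identically zero on some connected component $\Sigma \subset \partial M$, and try to violate the outermost property of the horizon.

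First I would derive a boundary PDE for $V|_\Sigma$. Restricting the static equation $\nabla^2 V = V\,\mathrm{Ric} + (\Delta V)\,g$ to directions tangent to $\Sigma$, using the Gauss-type decomposition $\nabla^2 V(X,Y) = \nabla^{\Sigma,2} V(X,Y) + A(X,Y)\,\nu(V)$, the trace identity $(n-1)\Delta V = -VR$, and the minimality $H = 0$ of the horizon, I expect to arrive at
\[
\Delta_\Sigma V + V\,\mathrm{Ric}(\nu,\nu) = 0 \quad \text{on } \Sigma.
\]
Integrating against $V$ on the closed hypersurface $\Sigma$ yields $\int_\Sigma |\nabla_\Sigma V|^2 = \int_\Sigma V^2\,\mathrm{Ric}(\nu,\nu)$.

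The outermost horizon is stable, so testing the stability inequality with $\varphi = V|_\Sigma$ and combining with the identity above forces $\int_\Sigma |A|^2 V^2 \le 0$, hence $|A|\,V \equiv 0$ on $\Sigma$. Since $V|_\Sigma$ is a nontrivial solution of an elliptic equation its zero set has codimension one in $\Sigma$, so $A \equiv 0$: the component $\Sigma$ is totally geodesic. The boundary PDE then says that $V|_\Sigma$ is a zero-eigenfunction of the Jacobi operator on $\Sigma$; stability forces this eigenfunction to be the first eigenfunction, hence of one sign, say $V > 0$ on $\Sigma$.

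The last step, which I expect to be the main obstacle, is to promote this infinitesimal information --- a totally geodesic, marginally stable $\Sigma$ carrying a strictly positive Jacobi field $V$ --- to an actual closed minimal hypersurface lying strictly in the interior of $M$, contradicting outermost. I would flow $\Sigma$ in the direction $V\nu$ in Fermi coordinates, where the first variation of mean curvature vanishes, and attempt to exploit the rigidity of the static PDE (for example, viewing $V$ as the norm of a Killing field in the Lorentzian extension) to integrate the flow to a genuine one-parameter family of minimal hypersurfaces; alternatively a geometric-measure-theoretic barrier argument, based on mean-convex level sets of $V$ near $\Sigma$, should produce a smooth minimal hypersurface strictly separated from $\partial M$. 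Upgrading the linearized picture to an honest finite deformation is where I expect most of the work to lie.

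For the corollary: once $V|_{\partial M} = 0$ is established, the standard structure theorem for the zero set of a static potential shows that any interior zero of $V$ would produce a closed smooth totally geodesic (in particular minimal) hypersurface in the interior of $M$, again violating the outermost horizon property. Hence $V$ has no interior zeros and, by connectedness, is of constant sign; boundedness enters through the asymptotically flat asymptotics to ensure $V_\infty$ is a well-defined finite nonzero constant, so that this sign is nontrivial.
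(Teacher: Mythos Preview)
Your outline is essentially the paper's strategy: Lemma~\ref{lemma:stability} (your first two paragraphs) and then a splitting result (Proposition~\ref{proposition:splitting}) to push a minimal hypersurface into the interior. You have correctly located the nontrivial step and even named the right flow, $\partial_t = V\nu$. What you are missing is the concrete mechanism that makes this flow work, namely Galloway's monotonicity formula: along the normal exponential map of the conformally modified metric $V^{-2}g$ one has
\[
\frac{d}{dt}\left(\frac{H}{V}\right)=|A|^2\ge 0,
\]
so that $H(\cdot,t)\ge 0$ for $t>0$ automatically. This replaces the vague ``rigidity of the static PDE'' and ``mean-convex level sets of $V$'' with a one-line computation; once you have it, no Lorentzian interpretation or GMT barrier construction is needed.

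Two adjustments. First, stability of $\Sigma$ is not what closes the argument: you need that $\partial M$ is \emph{locally area minimizing} (which is part of the paper's definition of horizon boundary). The monotonicity gives $H\ge 0$ on nearby leaves; the first variation of area then forces $|\Sigma_t|\le|\Sigma_0|$, and only local area minimizing (not mere stability) yields equality, hence $H\equiv 0$ and a genuine foliation by closed minimal hypersurfaces in the interior. Second, for the corollary, your sketch is correct but you should say why an interior component of $V^{-1}(0)$ is compact: boundedness of $V$ together with the asymptotic expansion $V\to A_k\neq 0$ on each end confines the zero set to a compact region, after which the regular-hypersurface structure of $V^{-1}(0)$ and the harmonic maximum principle finish the job exactly as you indicate.
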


The motivation for the above theorem comes from the rigidity of the Riemannian positive mass theorem. In fact, combining with the work  of J.~Corvino on scalar curvature deformation~\cite{Corvino:2000}, the work of G.~Galloway and P.~Miao on static potentials~\cite[Theorem 4.1]{Galloway-Miao:2017}, and the rigidity result of O.~Chodosh and M.~Eichmair  \cite[Theorem 1.6]{Carlotto-Chodosh-Eichmair:2016},  the theorem  gives another proof to the rigidity of Riemannian positive mass theorem for asymptotically flat manifolds with horizon boundary in  three dimensions. We include the proof in Section~\ref{section:rigidity}.

We also obtain the following generalization of the result of Galloway and Miao~\cite[Theorem 4.1]{Galloway-Miao:2017}. Here, we only assume that the static potential is defined in an exterior region. 
\begin{theorem}\label{theorem:unbounded}
Let $3\le n\le 7$. Let $(M, g)$ be an $n$-dimensional asymptotically flat manifold. Suppose the boundary of $M$ is either empty or a disjoint union of smooth minimal hypersurfaces.  If one of the asymptotically flat ends admits an unbounded static potential, then there is a complete, non-compact, area minimizing hypersurface in $M$. \end{theorem}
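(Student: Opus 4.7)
The strategy is to realize the desired hypersurface as a subsequential limit of solutions to a sequence of Plateau-type problems in an exhaustion of $M$, with the asymptotic structure of the unbounded static potential $V$ supplying the boundary data.

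First I would analyze the asymptotic behavior of $V$ in the distinguished end. Tracing the static equation $\nabla^2 V = V\,\mathrm{Ric} + (\Delta V)\, g$ yields $(n-1)\Delta V + V R = 0$, so $V$ satisfies an elliptic equation whose leading part limits to the Euclidean Laplacian in the asymptotic chart while its right-hand side is controlled by the decay of $\mathrm{Ric}$ and $R$. A standard weighted Schauder argument then shows that any unbounded solution must be asymptotic to a non-trivial affine function, $V = \langle a,x\rangle + b + o(|x|)$ with $a\neq 0$. After rotating coordinates I may take $a$ along the $x^n$-axis, so that for each regular value $t$ the level set $\{V=t\}$ meets large coordinate spheres $\partial B_R$ in codimension-two submanifolds $\Gamma_{R,t}$ that, rescaled by $R^{-1}$, are $C^1$-close to the standard equator of the unit sphere.

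Next, for a fixed regular value $t$ and each large $R$, let $\Omega_R\subset M$ be the compact region cut off by $\partial B_R$ in the chosen chart. I would solve the Plateau problem of minimizing $(n-1)$-volume among integral currents in $\Omega_R$ with boundary $\Gamma_{R,t}$, imposing a free-boundary condition on $\partial M$ and using its minimality as a one-sided barrier. Existence is standard by Federer--Fleming, and the restriction $3\le n\le 7$ is precisely where interior regularity for codimension-one area-minimizing integral currents holds, so each minimizer is a smooth embedded hypersurface $\Sigma_R$. Comparing with a flat slab bounded by the asymptotic hyperplane $\{x^n=\mathrm{const}\}$ gives the uniform area bound $\mathcal{H}^{n-1}(\Sigma_R\cap K)\le C(K)$ on every compact $K\subset M$, and compactness for area-minimizing integral currents together with smooth convergence on compacta produces a subsequential limit $\Sigma_\infty$ that is a complete embedded area-minimizing hypersurface in $M$.

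The main obstacle, which I expect to be the delicate step, is to ensure that $\Sigma_\infty$ is non-trivial and non-compact. Non-compactness is forced in principle because the boundaries $\Gamma_{R,t}$ are $R$-scaled approximate equators, so each $\Sigma_R$ must contain points at distance $\gtrsim R$ from a fixed basepoint; however, I still must prevent the sequence $\Sigma_R$ from collapsing entirely onto the compact minimal boundary $\partial M$ or from drifting to infinity and leaving every fixed compact set empty in the limit. I would rule out both scenarios by combining a density/monotonicity lower bound for area minimizers with a barrier built from the level sets $\{V=s\}$ themselves, so that a definite amount of $\Sigma_R$-area is trapped in each fixed annulus of the asymptotic chart, producing a genuinely non-compact, complete, area-minimizing limit $\Sigma_\infty$.
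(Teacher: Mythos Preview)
Your overall architecture matches the paper's: normalize so that $V\sim x^n$ on the distinguished end, take Plateau solutions $\Sigma_r$ spanning the nearly-equatorial $(n-2)$-sphere $S_r\cap V^{-1}(0)$, and pass to a subsequential limit using uniform local area bounds. The asymptotic analysis and the compactness step are essentially right, and you correctly isolate the one genuinely hard point: preventing the $\Sigma_r$ from drifting off to infinity so that the limit is nonempty.

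The gap is precisely there. Your proposed mechanism---density/monotonicity lower bounds for area minimizers together with ``barriers built from the level sets $\{V=s\}$''---does not do the job. The static potential is only defined on the end, so its level sets do not extend into the compact core and cannot trap $\Sigma_r$ there. For $s\neq 0$ the level sets $\{V=s\}$ carry no mean-curvature sign that would make them barriers, and the totally geodesic set $V^{-1}(0)$ is exactly where $\partial\Sigma_r$ sits, so it cannot serve as a one-sided barrier against $\Sigma_r$. Finally, the standard monotonicity formula only gives an area \emph{lower} bound once you already know a point of $\Sigma_r$ lies in a fixed compact set; it does not by itself prevent drift.

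The paper closes this step by a different use of the static structure, due to Galloway--Miao. If some $\Sigma_r$ missed a fixed ball $B_{r_0}$, then $\Sigma_r\cap N^+$ would separate $B_{r_0}$ from infinity in the region $N^+=\{V>0\}\setminus B_{r_0}$. One then works in the conformal metric $\bar g=V^{-2}g$: the zero set of $V$ lies at infinite $\bar g$-distance, so a $\bar g$-minimizing geodesic from the interior of $\Sigma_r$ to a far coordinate sphere exists and stays in $N^+$. Along it one looks at the level sets of the $\bar g$-distance from $\Sigma_r$ and invokes the identity $\tfrac{d}{dt}(H/V)=|A|^2$ for the flow $\tfrac{\partial}{\partial t}\Phi=V\nu$; starting from the minimal $\Sigma_r$, these level sets acquire a mean-curvature sign in $g$ that is incompatible, via the maximum principle, with the strict convexity of large coordinate spheres. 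Thus every $\Sigma_r$ must intersect $B_{r_0}$, and the limit is a nonempty, complete, non-compact area minimizer. This $\bar g$-geodesic/monotonicity argument is the missing ingredient in your proposal; the static equation enters not through barriers made from level sets of $V$, but through the mean-curvature monotonicity of the $V\nu$-flow off a minimal hypersurface.
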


In the above theorem, we do not assume the scalar curvature of $g$ to be everywhere nonnegative. In the proof, the complete area minimizing hypersurface is obtained as a limit of a sequence of Plateau solutions, and it is a well-known fact that the limiting hypersurface is smooth in dimensions $3\le n\le 7$.

If $n=3$ and the scalar curvature of $g$ is nonnegative in $M$, by the result of Chodosh and Eichmair  \cite[Theorem 1.6]{Carlotto-Chodosh-Eichmair:2016}, an immediate consequence of Theorem~\ref{theorem:unbounded} gives the following statement.  

\begin{corollary}
Let $(M, g)$ be a three-dimensional asymptotically flat manifold with horizon boundary.  Suppose $(M, g)$ has nonnegative scalar curvature. If the exterior region of $(M, g)$ admits an unbounded static potential, then $(M, g)$ is isometric to Euclidean space. 
\end{corollary}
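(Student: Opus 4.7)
The proof proposal is very short because the corollary is designed to combine Theorem~\ref{theorem:unbounded} with the cited rigidity result of Chodosh and Eichmair as a one-line deduction; the only task is to verify that the hypotheses of Theorem~\ref{theorem:unbounded} are met and that the conclusion of \cite[Theorem 1.6]{Carlotto-Chodosh-Eichmair:2016} applies.

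First I would check the hypotheses of Theorem~\ref{theorem:unbounded}. We are in dimension $n=3$, which lies in the permitted range $3\le n\le 7$. A horizon boundary is, by the conventions set up in the appendices referenced in the paper, a (possibly empty) disjoint union of smooth minimal hypersurfaces, so the boundary hypothesis holds. The existence of an unbounded static potential on the exterior region is given. Applying Theorem~\ref{theorem:unbounded} therefore produces a complete, non-compact, area minimizing hypersurface $\Sigma\subset M$.

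Next I would invoke the rigidity theorem of Chodosh and Eichmair: an asymptotically flat $3$-manifold with nonnegative scalar curvature that contains a complete, non-compact, area minimizing surface must be isometric to Euclidean $\mathbb{R}^3$. Since $(M,g)$ satisfies the scalar curvature hypothesis by assumption and contains such a surface $\Sigma$ by the previous step, this rigidity statement gives the desired isometry. As a by-product, the horizon boundary of $M$ is automatically empty, consistent with $(M,g)$ being flat $\mathbb{R}^3$.

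The only real obstacle would be a mismatch between the version of the Chodosh--Eichmair theorem as used here and the one actually proved in \cite{Carlotto-Chodosh-Eichmair:2016} (for instance, whether it allows a horizon boundary in the ambient manifold, or requires $\Sigma$ to be a specific kind of area minimizer such as a boundary of a locally area minimizing region). I would address this by noting that the complete area minimizing hypersurface produced in the proof of Theorem~\ref{theorem:unbounded} is obtained as a limit of Plateau solutions, and hence arises as the boundary of a locally area minimizing region in $M$; this is precisely the form of area minimizer to which the Chodosh--Eichmair rigidity result applies. No further argument is required.
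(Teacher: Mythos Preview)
Your proposal is correct and matches the paper's own treatment: the corollary is stated as an immediate consequence of Theorem~\ref{theorem:unbounded} together with \cite[Theorem 1.6]{Carlotto-Chodosh-Eichmair:2016}, and you have verified the hypotheses of both results exactly as intended. The additional remark about the Plateau-limit construction ensuring the right type of area minimizer is a reasonable clarification but not something the paper spells out.
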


We  include other results related to Bartnik's quasi-local mass in Section~\ref{section:2} and Section~\ref{section:3}.

\begin{ack}
The first named author is  grateful to Jim Isenberg for support. We thank  Lucas Ambrozio, Justin Corvino, and Jeff Jauregui for comments on an earlier version of the paper. We also thank the referee for helpful suggestions to improve the presentation. 
\end{ack}

\section{Proof of  Theorem~\ref{theorem:bounded}}

To establish the relation between locally area minimizing hypersurfaces and a static potential, we need the following lemma. Recall that in the Appendix \ref{section:static}, we define the static potential $V$ as a non-trivial solution  to the following static equation
\[
	 -(\Delta V)g + \nabla^2 V - V Ric=0.
\]
\begin{lemma}[{\cite[Equations (9)-(14)]{Miao:2005}}]\label{lemma:stability}
Let $(\Omega, g)$ be an $n$-dimensional Riemannian manifold. Suppose that $\Omega$ admits a static potential $V$. Let $\Sigma$ be a  closed, connected, stable minimal hypersurface in $\Omega$. Then we have the following: 
\begin{enumerate}
\item Either $V>0$ or $V<0$ on $\Sigma$, unless $V$ is identically zero on $\Sigma$. 
\item $\Sigma$ is totally geodesic.
\end{enumerate}
\end{lemma}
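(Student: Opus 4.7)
The plan is to combine the static equation, the minimality of $\Sigma$, and the stability inequality by testing with restrictions of $V$ in order to derive rigid pointwise conclusions.

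First, I would derive a Jacobi-type equation for $V|_\Sigma$. Contracting the static equation with $\nu\otimes\nu$ gives
\[
\nabla^2 V(\nu,\nu) = \Delta V + V\,\mathrm{Ric}(\nu,\nu),
\]
while the standard decomposition of the ambient Laplacian along a minimal hypersurface reads $\Delta V = \Delta_\Sigma V + \nabla^2 V(\nu,\nu)$ (the mean-curvature term drops since $H = 0$). Combining these yields the scalar identity
\[
\Delta_\Sigma V + \mathrm{Ric}(\nu,\nu)\, V = 0 \quad \text{on } \Sigma.
\]

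For conclusion (1), I would test the stability inequality $\int_\Sigma |\nabla_\Sigma \phi|^2 \ge \int_\Sigma (|A|^2 + \mathrm{Ric}(\nu,\nu))\phi^2$ with the Lipschitz function $\phi = V_+ := \max(V,0)$, which is admissible by density. Integrating by parts on the open set $\{V > 0\}$ (the boundary integral vanishes since $V = 0$ on $\partial\{V > 0\}$) and using the equation above gives $\int_\Sigma |\nabla V_+|^2 = \int_\Sigma \mathrm{Ric}(\nu,\nu) V_+^2$, and substituting back into stability forces $\int_\Sigma |A|^2 V_+^2 \le 0$. Hence $V_+$ achieves equality in the Rayleigh quotient of the Jacobi operator, so either $V_+ \equiv 0$ or $V_+$ is a principal eigenfunction with eigenvalue zero; in the latter case, standard principal-eigenvalue theory on the connected closed manifold $\Sigma$ yields $V_+ > 0$ everywhere. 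Running the same argument with $V_-$ gives (1).

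For conclusion (2), I split into two cases. If $V$ has a definite sign on $\Sigma$, testing stability with $\phi = V$ and repeating the integration by parts gives $\int_\Sigma |A|^2 V^2 \le 0$, forcing $A \equiv 0$ since $V$ is nowhere zero. If instead $V \equiv 0$ on $\Sigma$, I would restrict the static equation to tangent vectors $X,Y$, obtaining $\nabla^2 V(X,Y) = (\Delta V)\, g(X,Y)$; the Gauss formula together with $V|_\Sigma = 0$ shows $\nabla^2 V(X,Y) = -h(X,Y)\, V_\nu$ for tangent $X,Y$, and taking the $\Sigma$-trace with $H = 0$ yields $\Delta V = 0$ on $\Sigma$. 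Hence $h(X,Y)\, V_\nu = 0$ pointwise. Unique continuation for the static system (the vanishing of the Cauchy data $V = V_\nu = 0$ on any open piece of $\Sigma$ would force $V \equiv 0$ throughout $M$) rules out the vanishing of $V_\nu$ on any open subset of $\Sigma$, so $h$ vanishes on a dense open set and hence on all of $\Sigma$ by continuity.

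The main obstacle is the degenerate case $V \equiv 0$ on $\Sigma$ in part (2): the stability inequality becomes vacuous, and one must instead exploit the overdetermined pointwise structure of the static equation along $\Sigma$ together with a unique-continuation argument that prevents $V_\nu$ from vanishing on open subsets of $\Sigma$.
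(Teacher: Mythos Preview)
Your proof is correct and follows essentially the same strategy as the paper: derive the identity $\Delta_\Sigma V + \mathrm{Ric}(\nu,\nu)V = 0$ on $\Sigma$, combine it with stability to conclude that $V$ is a first eigenfunction (hence of fixed sign), and then plug $V$ back into the stability inequality to force $\int_\Sigma |A|^2 V^2 \le 0$. The only differences are cosmetic: the paper works directly with $V$ and the operator $-\Delta_\Sigma - \mathrm{Ric}(\nu,\nu)$ (observing that stability makes its bottom eigenvalue nonnegative while $V$ exhibits $0$ as an eigenvalue), whereas you test with $V_\pm$; and in the case $V|_\Sigma \equiv 0$ the paper simply quotes the standard fact that the zero set of a static potential is a totally geodesic regular hypersurface, which you instead re-derive via the static equation and unique continuation (note your argument actually gives the stronger conclusion $V_\nu \neq 0$ pointwise, not merely on a dense set).
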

\begin{proof}
By the stability inequality, for any $\phi\in C^1(\Sigma)$, 
\[
	\int_\Sigma |\nabla_\Sigma \phi|^2 \, d\sigma\ge \int_\Sigma\left( |A|^2 + Ric(\nu, \nu)\right) \phi^2\, d\sigma \ge \int_\Sigma Ric(\nu, \nu) \phi^2\, d\sigma, 
\]
where $\nu$ is a unit normal vector field to $\Sigma$ and $d\sigma$ is the $(n-1)$-volume measure of hypersurfaces. It implies that the first eigenvalue of the operator $\Delta_\Sigma + Ric(\nu, \nu)$ is non-positive, where $\Delta_\Sigma$ is the induced Laplacian. On the other hand, since $\Sigma$ is minimal, the restriction of the static potential $V$ on $\Sigma$ satisfies $\Delta V= \Delta_\Sigma V +\nabla^2 V(\nu, \nu)$. By the static equation of $V$
\begin{align}\label{equation:Ric}
	0= \Delta_\Sigma V + \nabla^2 V(\nu, \nu) - \Delta V = \Delta_\Sigma V +Ric(\nu, \nu)  V.
\end{align}
It implies either $V$ is identically zero or $V$ is  the first eigenfunction with the zero eigenvalue. If $V$ is zero on $\Sigma$, then $\Sigma$ lies in the  zero set of $V$ which is totally geodesic. If $V$ is the first eigenfunction, then $V$ does not vanish on $\Sigma$.  Substituting $V$ in the stability inequality, we obtain $\int_\Sigma |A|^2 V^2 \le 0$. Thus $|A|\equiv 0$ and $\Sigma$ is also totally geodesic. 

\end{proof}

If, furthermore, $\Sigma$ is locally area minimizing,  a splitting result is obtained by adapting the argument of Galloway in three dimensions \cite[Lemma 3]{Galloway:1993}. We note that the argument of Galloway is also extended in \cite[Proposition 14 and Appendix B]{Ambrozio:2015}, which covers some of the following results in three dimensions. 

\begin{proposition}\label{proposition:splitting}
 Let $(\Omega, g)$ be an $n$-dimensional Riemannian manifold with the scalar curvature $R_g = 0$. Suppose that $\Omega$ admits a static potential $V$. Let $\Sigma$ be a  locally area minimizing,  closed, connected hypersurface in $\Omega$. Suppose $V$ is not identically zero on $\Sigma$. Then there is a subset $U$ of $\Omega$ and a diffeomorphism $\Phi: \Sigma\times [0, \epsilon) \to U$  so that the following  holds: 
\begin{enumerate}
\item  The $(n-1)$-volume of hypersurfaces $\Sigma_t:=\Phi(\Sigma\times \{ t\})$ is constant in $t$. 
\item The induced scalar curvature $R_\Sigma$ of $\Sigma_t$ is zero and $V$ is constant on $\Sigma_t$ for each $t$. 
\item The Ricci curvature of $g$ is zero on $U$.
\end{enumerate}

\end{proposition}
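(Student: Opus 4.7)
The plan is to build a constant-mean-curvature (CMC) foliation of a one-sided tubular neighborhood of $\Sigma$ using $V|_\Sigma$ as an initial lapse, combine local area minimization with the static equation to force the leaves to be minimal, totally geodesic and of constant area with $V$ constant on each, and then read off $Ric\equiv 0$ from $\nabla^2 V = V\cdot Ric$.

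By Lemma~\ref{lemma:stability}, $\Sigma$ is totally geodesic and, after possibly switching the sign of $V$, we have $V>0$ on $\Sigma$ with $V|_\Sigma$ spanning the kernel of the Jacobi operator $L := \Delta_\Sigma + Ric(\nu,\nu)$. Since $L$ is self-adjoint and Fredholm with one-dimensional kernel spanned by a positive function, a standard Lyapunov--Schmidt/implicit function theorem argument applied to the mean-curvature operator on normal graphs over $\Sigma$ produces a smooth family $\Sigma_t = \Phi(\Sigma\times\{t\})$, $t\in(-\epsilon,\epsilon)$, of hypersurfaces of constant mean curvature $\lambda(t)$, with smooth positive lapse $f_t$ satisfying $f_0 = V|_\Sigma$.

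The main step is to show $\lambda \equiv 0$. The standard evolution equation along a CMC foliation reads
\[
	\Delta_{\Sigma_t} f_t + \big(|A_t|^2 + Ric(\nu_t,\nu_t)\big)\, f_t = -\lambda'(t),
\]
and using $R_g=0$ via the Gauss equation $Ric(\nu_t,\nu_t) = \tfrac{1}{2}(R_{\Sigma_t} + \lambda(t)^2 - |A_t|^2)$, integration over $\Sigma_t$ yields
\[
	-\lambda'(t)\,|\Sigma_t| = \tfrac{1}{2}\int_{\Sigma_t} \big(|A_t|^2 + \lambda(t)^2 + R_{\Sigma_t}\big)\, f_t\, d\sigma_t.
\]
On the other hand, $|\Sigma_t|' = -\lambda(t)\int_{\Sigma_t} f_t$ and local area minimization of $\Sigma_0$ gives $|\Sigma_t| \ge |\Sigma_0|$, so $t\,\lambda(t)\le 0$ and $\lambda(0)=0$. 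To upgrade this partial sign control to $\lambda\equiv 0$, I would feed in the static equation: because $R_g=0$, its trace gives $\Delta V = 0$ throughout $\Omega$ and the remainder reduces to $\nabla^2 V = V\cdot Ric$. Expanding $\Delta V = 0$ along the foliation yields, on each leaf, the relation
\[
	\Delta_{\Sigma_t} V + \lambda(t)\, V_{\nu_t} + V\, Ric(\nu_t,\nu_t) = 0,
\]
which together with the displayed CMC identity is expected to be incompatible with $\lambda \not\equiv 0$. The upshot is $\lambda \equiv 0$, $|A_t|\equiv 0$, and $R_{\Sigma_t}\equiv 0$, and hence $|\Sigma_t|$ constant in $t$.

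Once $\lambda \equiv 0$ and $|A_t|\equiv 0$, the above Laplace identity becomes $\Delta_{\Sigma_t} V + Ric(\nu_t,\nu_t) V = 0$, so Lemma~\ref{lemma:stability} places $V|_{\Sigma_t}$ in the kernel of the Jacobi operator of each stable minimal leaf. Combining $R_{\Sigma_t}\equiv 0$ with the Gauss equation for a totally geodesic minimal leaf in an $R_g=0$ ambient gives $Ric(\nu_t,\nu_t)\equiv 0$, so the Jacobi equation further reduces to $\Delta_{\Sigma_t} V = 0$ and $V$ is constant on each $\Sigma_t$. Inserting this and the vanishing of $|A_t|$ into $\nabla^2 V = V\cdot Ric$ in coordinates adapted to the foliation reads off the remaining Ricci components and produces $Ric\equiv 0$ on $U$. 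The main obstacle is the intermediate step of forcing $\lambda \equiv 0$: area minimization alone supplies only sign control on $\lambda(t)$, so the static equation must be used in an essential way along the entire foliation---the role played in three dimensions by Gauss--Bonnet on the (surface) leaves, as in~\cite[Lemma~3]{Galloway:1993} and~\cite[Appendix~B]{Ambrozio:2015}.
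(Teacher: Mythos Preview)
You have correctly identified the crux of the argument, and you are honest that the step forcing $\lambda\equiv 0$ is incomplete. This is a genuine gap: from the CMC foliation and local area minimization you only obtain $t\,\lambda(t)\le 0$, and your proposed combination of the CMC evolution identity with the expansion of $\Delta V=0$ along the leaves does not single out $\lambda\equiv 0$ without an additional sign on $R_{\Sigma_t}$ (which is exactly what Gauss--Bonnet supplies in dimension three but is unavailable in general). Relatedly, your sentence ``the upshot is $\lambda\equiv 0$, $|A_t|\equiv 0$, and $R_{\Sigma_t}\equiv 0$'' is not justified: even if $\lambda\equiv 0$, the integrated identity $\int_{\Sigma_t}(|A_t|^2+R_{\Sigma_t})f_t=0$ yields neither $|A_t|\equiv 0$ nor $R_{\Sigma_t}\equiv 0$ separately.

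The paper circumvents this difficulty by choosing a different deformation: instead of a CMC foliation, take $\Phi$ to be the normal exponential map of $\Sigma$ with respect to the conformal metric $\bar g=V^{-2}g$. Along that flow one has Galloway's monotonicity formula (Lemma~\ref{lemma:monotonicity})
\[
\frac{d}{dt}\!\left(\frac{H}{V}\right)=|A|^2,
\]
which gives $H(\cdot,t)\ge 0$ \emph{pointwise} for $t\ge 0$. Local area minimization of $\Sigma_0$ together with the first variation then forces $H\equiv 0$ and, feeding back into the monotonicity formula, $A\equiv 0$ on every leaf; the constancy of $|\Sigma_t|$ is immediate. This is exactly the ``essential use of the static equation along the whole foliation'' you were looking for, packaged as a pointwise differential inequality rather than an integrated one. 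After this, the paper does not appeal to the Jacobi equation as you do; instead it differentiates $A\equiv 0$ (first variation of the second fundamental form) to obtain $\nabla_\Sigma^2 V=\tfrac12 V\,Ric_\Sigma$ on each leaf, and a short divergence computation shows $R_\Sigma V^2+2|\nabla_\Sigma V|^2$ is constant and vanishes, whence $R_\Sigma\equiv 0$, $V$ is constant on each leaf, and $Ric_\Sigma\equiv 0$; the remaining Ricci components vanish by Gauss and Codazzi. Your concluding step, by contrast, presupposes $R_{\Sigma_t}\equiv 0$ to deduce $Ric(\nu_t,\nu_t)=0$ and hence $\Delta_{\Sigma_t}V=0$, so it is circular as written.
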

 \begin{proof}
 By Lemma~\ref{lemma:stability}, we  may without loss of generality assume $V>0$ on $\Sigma$. Consider the deformation $\Phi:\Sigma\times [0, \epsilon) \to \Omega$ given by the normal exponential map with respect to the conformally modified metric $V^{-2} g$ in  a collar neighborhood of $\Sigma$ where $V>0$.  Let $\Sigma_t = \Phi(\Sigma\times \{ t\})$ and note $\Sigma_0 = \Sigma$.  Let $H(\cdot, t), A(\cdot, t)$ be the mean curvature and second fundamental form of $\Sigma_t$ in  the metric $g$, respectively. Lemma~\ref{lemma:monotonicity} implies that  $H(\cdot, t)\ge 0$ for $t\in (0, \epsilon)$. From the first variation of area, we have
 \[
 	|\Sigma_t| - |\Sigma_0| = \int_0^t\left( -\int_{\Sigma_s} VH(\cdot, s) \, d\sigma \right) ds.
 \]
For $\epsilon$ sufficiently small, $\Sigma$ is locally area minimizing. Therefore, the above identity implies that the mean curvature of $\Sigma_t$ cannot be strictly positive for $t<\epsilon$. Hence $H(\cdot, t)\equiv 0$ and the $(n-1)$-volume of $\Sigma_t$ is a constant. By Lemma~\ref{lemma:monotonicity} again, $A(\cdot, t) \equiv 0$ and  $\Sigma_t$ is totally geodesic for $t\in [0, \epsilon)$ with respect to the metric $g$.

Furthermore, using the first variation of the second fundamental form (see, for example, \cite[p. 993]{Carlotto-Chodosh-Eichmair:2016} and the references therein), we obtain, for vectors $X, Y$ tangential to $\Sigma_t$,  
 \[
 	\nabla_{\Sigma}^2 V(X, Y) + Rm(\nu, X, Y, \nu) V=0,
 \]
 where $\nabla_{\Sigma}$ denotes the connection  of $\Sigma_t$, $\nu$ is a unit normal vector to $\Sigma_t $ (both with respect to the metric $g$), and $Rm$ is the Riemann curvature tensor  of $(\Omega, g)$ (with the sign convention that the Ricci tensor is the trace on the first and fourth components of $Rm$).   Because $\Sigma_t$ is totally geodesic,  $\nabla_{\Sigma}^2 V(X, Y) = \nabla^2 V(X, Y)$ for tangential vectors $X, Y$. Then by the static equation \eqref{equation:static}, the assumption that $R_g=0$, and $V>0$,  we obtain $Ric(X, Y) = -Rm(\nu, X, Y, \nu)$. For an orthonormal frame $\{ E_i \}$ on $\Sigma_t$, 
 \begin{align*}
 	Ric(X, Y)& = Rm(\nu, X, Y, \nu) + \sum_i Rm(E_i, X, Y, E_i)\\
	&= - Ric(X, Y) +   Ric_\Sigma (X, Y),
 \end{align*}	
 where we also use the Gauss equation in the second equality and denote by $Ric_{\Sigma} $ the Ricci  tensor of $\Sigma_t$ induced from $g$. It gives that, for all tangential vector fields $X, Y$ to $\Sigma_t$, 
 \begin{align} \label{equation:Ricci}
	 Ric(X, Y) = \frac{1}{2} Ric_{\Sigma}(X, Y)
 \end{align}
 and hence, combining the previous formulas gives 
\begin{align}
	\nabla_{\Sigma}^2 V &=  \frac{1}{2} VRic_{\Sigma}  \label{equation:induced-static-1}\\
	\Delta_\Sigma V &= \frac{1}{2}VR_\Sigma,\label{equation:induced-static-2}
\end{align}
where $R_\Sigma$ denotes the scalar curvature of $\Sigma_t$. Take the divergence of \eqref{equation:induced-static-1} on $\Sigma_t$ and note $\mathrm{div}_\Sigma \left(\nabla_\Sigma^2 V \right)=d(\Delta_\Sigma V) + Ric_\Sigma\cdot \nabla_\Sigma V $, where the dot in the last term denotes tensor contraction. Hence, we derive that, on each $\Sigma_t$,
\begin{align*}
	 0 &= d(\Delta_\Sigma V) + Ric_{\Sigma}\cdot \nabla_\Sigma V -\frac{1}{2}  \left(\frac{1}{2} V dR_\Sigma  + Ric_{\Sigma}\cdot \nabla_\Sigma V\right)\\
	 &=\frac{1}{2}d( R_\Sigma V )+ \frac{1}{2}  Ric_{\Sigma}\cdot \nabla_\Sigma V - \frac{1}{4}V dR_\Sigma  \\
	 &= \frac{1}{4} VdR_\Sigma + \frac{1}{2} R_\Sigma dV  + V^{-1}\nabla_\Sigma^2 V \cdot \nabla_\Sigma V\\
	 	&= \frac{1}{4}V^{-1} d( R_\Sigma V^2 + 2|\nabla_\Sigma V|^2 ).
\end{align*} 
This implies that $R_\Sigma V^2 + 2|\nabla_\Sigma V|^2 $ is constant on each $\Sigma_t$ and in fact,  by \eqref{equation:induced-static-2}, 
\[
	R_\Sigma V^2 + 2|\nabla_\Sigma V|^2 = 0.
\]
It gives $R_\Sigma\le 0$. On the other hand, by \eqref{equation:induced-static-2},
\[
	\int_{\Sigma_t} R_\Sigma\, d\sigma =2 \int_{\Sigma_t} V^{-2} |\nabla_\Sigma V|^2 \, d\sigma\ge 0.
\]
Hence $R_\Sigma =0$  and $V$ is constant on $\Sigma_t$ for each $t\in [0,\epsilon)$. By \eqref{equation:induced-static-1}, $Ric_\Sigma=0$ and by \eqref{equation:Ricci} $Ric(X, Y)=0$ for vectors tangential to $\Sigma_t$.  By the Codazzi equation, $Ric(X, \nu)$ is zero, and by the Gauss equation, $Ric(\nu, \nu)$ is zero. Thus, the Ricci tensor is zero in $U$.
\end{proof}

\begin{proof}[Proof of Theorem~\ref{theorem:bounded}]
Note that the scalar curvature of $g$ is constant on $M$ and hence must be zero, by asymptotic flatness. 
If $V$ is not zero on $\partial M$, by Proposition~\ref{proposition:splitting}, a collar neighborhood of $\partial M$ in $M$ splits as a foliation of minimal hypersurfaces. It contradicts that $M$ contains no closed minimal hypersurfaces other than $\partial M$. We also note that since $V$ is not identically zero, each component of the zero set of $V$ is a regular hypersurface, and hence $\partial M$ is itself a connected component of the zero set. 

For the rest of the proof, we assume $V$ is bounded. By Proposition~\ref{proposition:asymptotics}, $V$ has the following expansion on each end $N_k$,  for a nonzero constant $A_k$, 
\[
	V (x)=  A_k+ O(|x|^{2-n}).
\]
We may assume $A_1>0$ (otherwise, consider $-V$). It implies that $A_k>0$ for all other $k$; otherwise, the zero set of $V$ is nonempty in the interior $M$, which would  imply that $M$ has a closed minimal hypersurface other than $\partial M$. Therefore, by the strong maximum principle for harmonic functions, $V>0$ in $M$. 
\end{proof}
We remark that in the preceding proof, we can further  apply the Hopf boundary point lemma to conclude that, for $V>0$ in $M$, the normal derivative $\nabla_\nu V >0$ on $\partial M$ with respect to the normal vector $\nu$ to $\partial M$ pointing into $M$.

\section{Proof of Theorem~\ref{theorem:unbounded}}

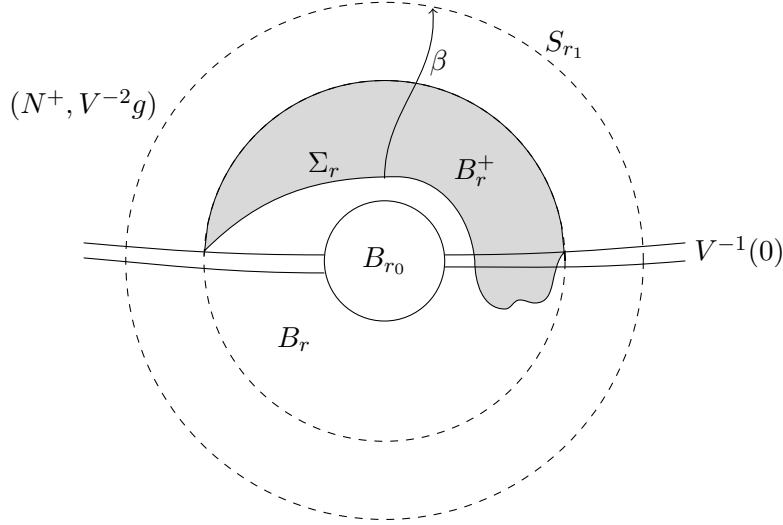
\begin{figure}
\begin{tikzpicture}[scale=0.8]
\draw[fill=gray!30!white] 
plot[smooth,samples=100]  (4,0.16) to [out=45,in=180] (7.2,1.4) --
(7.2,1.4) to [out=0, in=95] (8.5,0) -- 
(8.5,0) to [out =270,in=180 ] (9,-0.8) -- 
(9,-0.8) to [out=0,in = 180] (9.25,-0.65)--  
(9.25,-0.65) to [out=0, in= 180] (9.5,-0.7)--  
(9.5,-0.7) to [out=0,in = 225] (10,0.16) --
plot[dashed ,samples=100,domain=10:4] (\x, {sqrt(9 - (\x - 7)*(\x - 7))}) ;
\node at (6,1.6) {$\Sigma_r$};
\node at (2, 2.6) {$(N^+, V^{-2}g)$};
\node at (5.5,-1.3) {$B_r$};
\node at (7,0) {$B_{r_0}$};
\node at (12.9, 0.15) {$V^{-1}(0)$};
\node at (10,3.6) {$S_{r_1}$};
\node at (8.5,1.5) {$B_r^+$};
\node at (7.9,3.3) {$\beta$};
\draw (7,0) circle [radius = 1];
\draw [dashed] (7,0) circle [radius = 3];
\draw [dashed] (7,0) circle [radius = 4.3];
\draw [->](7, 1.37) to [out = 90, in =-85 ] (7.8, 4.2);
\draw (2,0.3) to [out = 355, in= 180] (6,0.1);
\draw (8,0.1) to [out = 0, in= 185] (12,0.3);
\draw (2,0.05) to [out = 355, in = 180] (6,-0.2);
\draw (8,-0.1) to [out = 0, in = 185] (12,0);
\end{tikzpicture}

\caption{An illustration for the contradictory argument in the proofs of Theorem~\ref{theorem:unbounded} and Lemma~\ref{lemma:plateau}. If the Plateau solution $\Sigma_r$ was disjoint from $B_{r_0}$, it would have separated $B_{r_0}$ from infinity in $N^+$. Then a minimizing geodesic $\beta$ in the interior with respect the modified metric $V^{-2}g$ from $\Sigma_r$ to a large coordinate sphere $S_{r_1}$ exists, which leads to a contradiction.}
\end{figure}

The following observation is due to G. Galloway and P. Miao in \cite{Galloway-Miao:2017}. We recall that $B_r$ denotes the large coordinate ball of radius $r$ with respect to the chart at infinity, as defined in Appendix~\ref{section:asymptotically-flat}.

  \begin{lemma}[{Essentially \cite[Theorem 3.1]{Galloway-Miao:2017}}]\label{lemma:plateau}
Let $n\ge 3$ and let $(M, g)$ be an $n$-dimensional asymptotically flat manifold. Let $N$ be one of the ends. Suppose  $N\setminus B_{r_0}$ admits a static potential $V$ for some $r_0>0$. Let $N^+$ be an unbounded component of the complement of $V^{-1}(0)$ in $N\setminus B_{r_0}$. Let  $\Sigma$ be a  compact two-sided minimal hypersurface in $N^+$ with boundary either on $V^{-1}(0)$ or empty. Then $\Sigma$ cannot separate $B_{r_0}$ from infinity in $N^+$; that is, every unbounded component of the complement of $\Sigma$ in  $N^+\cup B_{r_0}$ contains $B_{r_0}$.
 \end{lemma}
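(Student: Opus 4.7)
The strategy is to argue by contradiction: suppose some unbounded component $U$ of $(N^+\cup B_{r_0})\setminus\Sigma$ fails to contain $B_{r_0}$. By Lemma~\ref{lemma:stability}, after replacing $V$ by $-V$ if needed, we may assume $V>0$ on $N^+$, so $\tilde g:=V^{-2}g$ is a smooth Riemannian metric on $N^+\setminus V^{-1}(0)$. For each large $r_1$ I select $p_{r_1}\in S_{r_1}\cap U$ and take a curve $\beta_{r_1}$ in $\overline U$ minimizing the $\tilde g$-length among curves joining $\Sigma$ to $p_{r_1}$. The conformal blow-up of $\tilde g$ near $V^{-1}(0)$ (essentially because $\int dV/V$ diverges as $V\to 0$) keeps $\beta_{r_1}$ a positive $\tilde g$-distance from that stratum, so $\beta_{r_1}$ is a smooth $\tilde g$-geodesic inside $N^+\setminus V^{-1}(0)$, meeting $\Sigma$ $\tilde g$-orthogonally at $t=0$; let $L_{r_1}$ be its $\tilde g$-length.

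Because $\beta_{r_1}$ is $\tilde g$-minimizing from $\Sigma$, there is no $\Sigma$-focal point along it, and the $\tilde g$-normal exponential map from $\Sigma$ produces a smooth equidistant family $\Sigma_t$ in a tube around $\beta_{r_1}$, for $t\in[0,L_{r_1}]$. Parametrization by $\tilde g$-arclength corresponds to $g$-normal velocity $V$ along $\nu$, and since $\Sigma_0=\Sigma$ is $g$-minimal, the monotonicity Lemma~\ref{lemma:monotonicity} gives the pointwise inequality $H_g(\Sigma_t)\ge 0$ for $t>0$. The first variation of area,
\[
\tfrac{d}{dt}|\Sigma_t|_g=-\int_{\Sigma_t}V\,H_g\,d\sigma_t\le 0,
\]
then bounds $|\Sigma_t|_g$ uniformly by $|\Sigma|_g$ wherever the foliation is defined.

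The contradiction comes from letting $r_1\to\infty$. A check on the asymptotics of $V$ in the end (using $\nabla^2 V\to 0$ from the static equation together with asymptotic flatness) shows that the $\tilde g$-distance from $\Sigma$ to $S_{r_1}$ diverges with $r_1$. The minimizers $\beta_{r_1}$ therefore subconverge to a $\tilde g$-minimizing ray $\beta\colon[0,\infty)\to \overline U$ into $U$, and the associated family $\Sigma_t$ extends to all $t\ge 0$, lying in $\overline U$ with $g$-area bounded by $|\Sigma|_g$ and $g$-mean curvature $H_g\ge 0$. But in the asymptotic region of $N^+$, any such mean-convex family passing through arbitrarily large coordinate spheres must have $g$-area tending to infinity (by comparison with $S_r$, whose $g$-mean curvature with respect to the inward normal is $-(n-1)/r+O(r^{-2})$), contradicting the uniform area bound.

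The main obstacle is closing this last step cleanly: converting the pointwise mean-curvature monotonicity and uniform area bound on the equidistant foliation into a genuine incompatibility with the asymptotically Euclidean structure at infinity. The preliminary steps (existence and orthogonality of the minimizer, absence of focal points on a $\tilde g$-minimizing geodesic, and the infinite-length barrier at $V^{-1}(0)$) are standard, but running the $\beta_{r_1}$ to a limit ray and keeping $\Sigma_t$ under uniform control as it leaves every compact set of $N$ is the delicate part.
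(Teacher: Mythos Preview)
Your setup is essentially the same as the paper's: assume an unbounded component $U$ missing $B_{r_0}$, pass to the conformal metric $\tilde g = V^{-2}g$, take a $\tilde g$-minimizing geodesic from $\Sigma$ out to a large coordinate sphere, note it stays away from $V^{-1}(0)$ by the logarithmic divergence of $\tilde g$-length there, and foliate a tube about the geodesic by $\tilde g$-equidistants $\Sigma_t$ whose $g$-mean curvature has a sign by Lemma~\ref{lemma:monotonicity}. (One small remark: invoking Lemma~\ref{lemma:stability} to arrange $V>0$ on $N^+$ is unnecessary, since $N^+$ is by definition a connected component of $\{V\neq 0\}$.)

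Where your argument breaks down is the final contradiction. You try to close via an \emph{area} bound: from $H_g(\Sigma_t)\ge 0$ and the first variation you deduce $|\Sigma_t|_g\le |\Sigma|_g$, and then claim this conflicts with ``area tending to infinity'' near the end. But the $\Sigma_t$ you have constructed are only \emph{local} cross-sections of a tube about $\beta$; their $g$-area can be made as small as you like by shrinking the tube, so there is no lower bound to violate. Passing to a limiting ray $\beta$ does not help, and in fact the limit is not needed at all. (There is also a sign inconsistency between your first-variation formula and your stated mean curvature of $S_r$, but this is secondary.)

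The paper's contradiction is pointwise and uses no limit and no areas. Fix a single large $r_1$ with $S_{r_1}\cap U\neq\emptyset$ and $\Sigma\subset B_{r_1}$, and take $\beta$ to minimize $\tilde g$-length from $\Sigma$ to the \emph{set} $S_{r_1}\cap \overline U$; then $\beta$ meets $S_{r_1}$ $\tilde g$-orthogonally (hence $g$-orthogonally, by conformality) at its endpoint $p$. The equidistant $\Sigma_L$ through $p$ is tangent to $S_{r_1}$ there, with common normal $\nu$, and since every nearby point of $S_{r_1}$ has $\tilde g$-distance $\ge L$ to $\Sigma$, the sphere $S_{r_1}$ lies locally on the $\nu$-side of $\Sigma_L$. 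The comparison principle for second fundamental forms at a tangency then forces the mean curvature of $\Sigma_L$ with respect to $-\nu$ to be at least that of $S_{r_1}$, which is strictly positive for $r_1$ large by asymptotic flatness. This contradicts the monotonicity formula, which gives $H_g(\Sigma_L)\le 0$ with respect to $-\nu$. That is the missing idea: replace the global area argument by a local maximum-principle comparison with the strictly convex coordinate sphere at the endpoint of $\beta$.
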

\begin{proof}
Suppose on the contrary that there is an unbounded component $\Omega$ of the complement of $\Sigma$ in $N^+\cup B_{r_0}$ that does not contain $B_{r_0}$.   Note that $V$ is globally defined and nonzero on $\Omega$, and $\partial \Omega$ consists of $\Sigma$ and a subset of $V^{-1}(0)$. We may assume $V>0$ in $\Omega$; otherwise consider $-V$. We shall consider geodesics in the modified metric $\bar{g}=V^{-2} g$ emitting from $\Sigma$ into $\Omega$. It is shown in \cite[Lemma 3.1]{Galloway-Miao:2017} that those geodesics cannot reach the zero set of $V$ in the finite $\bar{g}$ length, and any two disjoint points in the zero set have infinite $\bar{g}$ distance. The rest of the argument follows from \cite[Theorem 3.1]{Galloway-Miao:2017} which we briefly summarize below.  Consider a large coordinate sphere $S_r$ that intersects  $ \Omega$ and is disjoint from $\Sigma\cap \Omega$. There exists a \emph{minimizing} geodesic in the modified metric $\bar{g}$ emitting from the interior of $\Sigma$ in $\Omega $ that reaches $S_r\cap \Omega$.  In a tubular neighborhood of the geodesic, we consider the level set of the distance function  with respect to the $\bar{g}$ metric from $\Sigma$. By the monotonicity formula Lemma~\ref{lemma:monotonicity}, those hypersurfaces have nonpositive mean curvature in the metric $g$ (with respect to $-\nu$, where $\nu$ in as in Lemma~\ref{lemma:monotonicity}). It ultimately leads a contradiction to the convexity of large coordinate spheres and the maximum principle. 
\end{proof}

 \begin{proof}[Proof of Theorem~\ref{theorem:unbounded}]
Let $V$ be an unbounded static potential on one of the ends, say $N$. By Proposition~\ref{proposition:asymptotics}, $V$ is asymptotic to a linear combination of coordinate functions on the end $N$. By rotating the asymptotically flat coordinate chart of $N$ and rescaling $V$ if necessary, we assume  $V(x)$ is asymptotic to $x^n$.  By Lemma~\ref{lemma:zero}, there is $r_0$ sufficiently large so that each component of $V^{-1}(0)$ is a graph $x^n = f(x^1, \dots, x^{n-1})$ that intersects with $S_r$ in a nearly equatorial $(n-2)$ sphere for $r>r_0$. We may assume $r_0$ sufficiently large so that $S_{r_0}$ does not intersect any closed minimal hypersurfaces. 

 For $r> r_0$, consider the orientable Plateau solution $\Sigma_r$ whose boundary spans the intersection of $S_r$ and a component of $V^{-1}(0)$. We claim that $\Sigma_r$ must intersect $B_{r_0}$ for all $r>r_0$.  Suppose on the contrary that $\Sigma_r$ is disjoint from $B_{r_0}$. Since $\Sigma_r$ separates $B_r$, there is a component $B_r^+$ of the complement of $\Sigma_r$ in  $B_r$ that does not contain $B_{r_0}$. We may without loss of generality assume that $B_r^+$ contains the top portion of $S_r$ (otherwise, consider $-V$). Then we consider the \emph{top} component $N^+$ of the complement of $V^{-1}(0)$ in $N\setminus B_{r_0}$, i.e. the component containing all points with sufficiently large $x^n$-coordinate values. Notice that $\Sigma_r \cap N^+$ separates $B_{r_0}$ from infinity in $N^+$, as the complement of $\Sigma_r$ in  $N^+\cup B_{r_0}$ has only one unbounded component $\Omega$, and $\Omega$ cannot intersect the components of $B_r \setminus \Sigma_r$ other than $B_r^+$, by connectedness of $\Omega$.  It gives a contradiction to Lemma~\ref{lemma:plateau}. 
 
 Since $\Sigma_r$ intersects $S_{r_0}$  for all $r>r_0$ and $\{ \Sigma_r\}$ has a  uniformly local area bound, by standard geometric measure theory, a subsequence of $\Sigma_r$ converges to a nonempty  complete area minimizing hypersurface $\Sigma$ intersecting $S_{r_0}$ as $r\to \infty$. Since $S_{r_0}$ does not intersect any closed minimal hypersurface, $\Sigma$ is unbounded.  
 \end{proof}

\section{Applications}
\subsection{Rigidity of the Riemannian positive mass theorem}\label{section:rigidity}

The Riemannian positive mass theorem is due to R.~Schoen and S.~T.~Yau~\cite{Schoen-Yau:1979-pmt1, Schoen-Yau:1981-asymptotics, Schoen-Yau:2017}. Other proofs for the three-dimensional case can be found in \cite{Huisken-Ilmanen:2001} and \cite{Li:2016}. 

Here we use static potentials to give another proof of the rigidity of the Riemannian positive mass theorem in three dimensions. The argument for complete manifolds without boundary may have already been known to  the experts. Here we use Theorem~\ref{theorem:bounded} and extend the argument to asymptotically flat manifolds with horizon boundary.

\begin{theorem}\label{theorem:rigidity}
 Let $(M, g)$ be a three-dimensional asymptotically flat manifold with horizon boundary and nonnegative scalar curvature. Assume $g\in C^{4,\alpha}_{\mathrm{loc}}$. If one of the ends has zero ADM mass, then $(M, g)$ is isometric to Euclidean space. 
\end{theorem}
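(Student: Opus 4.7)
The plan is to combine Theorem~\ref{theorem:bounded} with Corvino's scalar curvature deformation theorem \cite{Corvino:2000}, Theorem~\ref{theorem:unbounded} (which strengthens \cite[Theorem 4.1]{Galloway-Miao:2017}), and the area-minimizer rigidity theorem of Chodosh and Eichmair \cite[Theorem 1.6]{Carlotto-Chodosh-Eichmair:2016}. The logical structure is: zero ADM mass $\Longrightarrow$ the distinguished end admits an unbounded static potential $\Longrightarrow$ a complete non-compact area minimizing hypersurface exists in $M$ $\Longrightarrow$ $(M,g)$ is flat.

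First, I would reduce to the scalar-flat case. By a Schoen--Yau-type density argument (or a direct Corvino perturbation supported away from the horizon), if $R_g \ge 0$ were not identically zero, one could construct a nearby metric $\tilde g$ on $M$ with $R_{\tilde g}=0$, the same horizon boundary, and strictly smaller ADM mass on the distinguished end. Since $m_{\mathrm{ADM}}(g)=0$ by hypothesis, this would give $m_{\mathrm{ADM}}(\tilde g)<0$, contradicting the positive mass theorem applied to $\tilde g$. Hence $R_g \equiv 0$ on $M$.

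Next, I would invoke Corvino's deformation theory to produce an unbounded static potential on the end. Since $m_{\mathrm{ADM}}(g)=0$ is a local minimum of the ADM mass by PMT, the gradient of the mass on the space of AF perturbations preserving $R=0$ and the horizon must vanish at $g$. This gradient is dual to the cokernel of the linearized scalar curvature operator, whose elements are precisely static potentials; hence a non-trivial static potential exists. Applying the analogous criticality argument to the ADM center-of-mass functionals on the end then yields a static potential whose leading asymptotics is linear in the coordinate functions, i.e., an unbounded one. Theorem~\ref{theorem:bounded} enters here to treat the horizon boundary: it guarantees that the bounded components of the cokernel necessarily vanish on $\partial M$, which makes the obstruction analysis consistent with the horizon boundary condition and allows the linear-growth deformations to be genuinely free.

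Finally, given an unbounded static potential on the end, Theorem~\ref{theorem:unbounded} (applicable since $n=3\le 7$) produces a complete, non-compact, area minimizing hypersurface $\Sigma \subset M$. Combined with $R_g \ge 0$, Chodosh and Eichmair's rigidity theorem then forces $(M,g)$ to be isometric to Euclidean space — and in particular $\partial M = \emptyset$.

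The main obstacle is the middle step, namely the construction of an \emph{unbounded} static potential. Corvino's cokernel argument most directly produces the bounded static potential dual to the ADM mass; obtaining one with linear growth requires the corresponding obstruction theory for the ADM center-of-mass functionals, and carrying this out in the presence of a horizon boundary is precisely where Theorem~\ref{theorem:bounded} is indispensable — it rules out pathological boundary behavior of the bounded cokernel elements that would otherwise block the unbounded deformations.
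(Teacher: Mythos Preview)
Your overall architecture---produce a static potential from the zero-mass hypothesis, then feed it into the area-minimizer machinery---matches the paper's. The gap is in the middle step: you assert that one can always arrange the static potential to be \emph{unbounded}, and the paper neither claims nor needs this.

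The paper first shows, via Corvino's local deformation theorem applied on each member of an exhaustion by precompact sets (with a Neumann conformal-Laplacian step to keep the boundary minimal), that every precompact region admits a static potential; a normalized diagonal limit then yields a \emph{global} static potential $V$ on $M$. At that point there is no control on whether $V$ is bounded or unbounded, and the paper simply splits into cases. If $V$ is unbounded, Theorem~\ref{theorem:unbounded} together with Chodosh--Eichmair finishes, exactly as you describe. If $V$ is bounded, the paper uses a completely different argument: by Proposition~\ref{proposition:asymptotics} one has $V = A + o(|x|^{2-n})$ at infinity; integrating $V\Delta V = 0$ by parts, the term at infinity vanishes by decay, and the term on $\partial M$ vanishes because Theorem~\ref{theorem:bounded} gives $V|_{\partial M}=0$. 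Hence $\int_M |\nabla V|^2 = 0$, so $V$ is a nonzero constant, the static equation forces $\mathrm{Ric}=0$, and in three dimensions this yields flatness.

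Your proposed route to unboundedness---criticality of center-of-mass functionals---does not work as written. Zero ADM mass says nothing about criticality of the center of mass; indeed, when the mass vanishes the center of mass is generally not even well-defined, and there is no pairing that would hand you a linear-growth element of $\ker L_g^*$ from such a statement. Correspondingly, your invocation of Theorem~\ref{theorem:bounded} is misdirected: in the paper it is not used to ``free up'' an obstruction analysis, but to kill the $\partial M$ boundary term in the integration-by-parts argument that handles the bounded case.
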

\begin{proof}

We first apply the argument similar to \cite[Theorem 8]{Corvino:2000} to show that every precompact open subset $\Omega$ in $M$ admits a static potential $V\in C^{4,\alpha}(\Omega)\cap C^{2,\alpha}(\overline{\Omega})$.  The only difference is that in order to keep the minimal boundary condition, we consider the conformal Laplacian with the  \emph{Neumann boundary condition} as follows.  Suppose on contrary there is a precompact open subset $\Omega$ which does not admit a static potential. By \cite[Theorem 1]{Corvino:2000} (see, also, Theorem~\ref{theorem:deformation}), there is a $C^{2,\alpha}_{\mathrm{loc}}$ metric $\bar{g}$ with positive scalar curvature in $\Omega$ such that $\bar{g}$  coincides with $g$ outside $\Omega$. Then \cite[Lemma 3.3]{Schoen-Yau:1979-pmt1} implies there exists a unique positive solution $u$ to $\Delta_{\bar{g}} u -\frac{1}{8} R_{\bar{g}} u=0$ in $M$ with $\frac{\partial u}{\partial \nu} = 0$ on $\partial M$ and $u(x)\to 1$ as $|x|\to \infty$ so that $u^4 \bar{g}$ has negative mass. It gives a contradiction to the positive mass inequality. 

Next we show that there is a global vacuum static potential $V\in C^{4,\alpha}_{\mathrm{loc}}(M)$. Let $B_k$  be an exhaustion sequence of coordinate balls of $M$. As shown in the previous paragraph, each $B_k$ admits a static potential $V_k\in C^{4,\alpha}(B_k)\cap C^{2,\alpha}(\overline{B_k})$.   For a fixed $r_0 >1$, we may normalize $V_k$ so that $\max_{S_{r_0} \cup \partial M} (|V_k| + |\nabla V_k|) =1$ for all $k> r_0$. Then by maximum principle (for $V_k$ in $B_{r_0}$) and by Proposition~\ref{proposition:asymptotics} (for $V_k$ in the annulus $B_{r}\setminus B_{r_0}$), we have $\sup_{B_r}|V_k| \le C r$ uniformly for all $k >r$. Thus, $\| V_k\|_{C^{2,\alpha}(B_r)} $ is bounded uniformly in $k$ by the Schauder estimate. By Arzela-Ascoli theorem and taking the diagonal sequence, we obtain a $C_{\mathrm{loc}}^{2,\frac{\alpha}{2}}$ limiting function $V$ in $M$ with $\sup_{S_{r_0} \cup \partial M} (|V| + |\nabla V|) =1$. Hence, $V$ is a nontrivial solution to the static equation, and   $V\in C^{4,\alpha}_{\mathrm{loc}}(M)$ by elliptic regularity. 

If $V$ is unbounded in $M$, then $M$ has a complete, non-compact, area minimizing surface by the result of Galloway and Miao~\cite[Theorem 4.1]{Galloway-Miao:2017} (or Theorem~\ref{theorem:unbounded}). The rigidity follows from the work of Chodosh and Eichmair \cite[Theorem 1.6]{Carlotto-Chodosh-Eichmair:2016}. 

We now discuss the case that $V$ is bounded. Note that since any two ends must be separated by a minimal surface and $M$ does not contain any minimal surfaces in its interior, $M$ has only one end with zero ADM mass.  If $V$ is bounded, by Proposition~\ref{proposition:asymptotics}, $V$ goes to a  constant $A$ at infinity. More specifically, 
\[
	V = A + o(|x|^{2-n}). 
\]
 Integrating $V\Delta V=0$  yields
\begin{align*}
	0 = \int_M V\Delta V \, d\mu&=-\int_M |\nabla V|^2 \, d\mu+ \lim_{r\to \infty}\int_{S_r} V\frac{\partial V} {\partial r} \, d\sigma+\int_{\partial M} V\frac{\partial V} {\partial r} \, d\sigma \\
	&=-\int_M |\nabla V|^2 \, d\mu,
\end{align*}
where we use that if $\partial M$ is nonempty, then $V$ is zero on $\partial M$ by Theorem~\ref{theorem:bounded}. We obtain $|\nabla V|=0$ in $M$. Hence $V$ is constant on $M$ and then $g$ is Ricci flat by the static equation~\eqref{equation:static}. For three-dimensional manifolds, it implies $g$ has zero sectional curvature and hence $(M,g)$ must be isometric to Euclidean space.

\end{proof}

We remark that Theorem~\ref{theorem:rigidity} and Theorem~\ref{theorem:bounded} are closely related to the uniqueness of static black holes, which says that an asymptotically flat manifold admitting a global static potential $V\ge 0$ must be isometric to a Schwarzschild metric.  However, we emphasize that our proofs to  Theorem~\ref{theorem:rigidity} and Theorem~\ref{theorem:bounded} are  \emph{independent} of the uniqueness of static black holes.  The proof of Bunting and Masood-ul-Alam~\cite{Bunting-Masood-ul-Alam:1987} and the later extensions in \cite{Chrusciel:2005, Miao:2005} \emph{use} Theorem~\ref{theorem:rigidity}. Although some results are obtained independent of Theorem~\ref{theorem:rigidity} in \cite{Israel:1968, Muller-zum-Hagen:1973, Robinson:1977}, more stringent conditions, such as positivity of $V$ in the interior  of $M$, $V=0$ on $\partial M$, and connectedness of $\partial M$, are assumed.

\subsection{The mass minimizer of Bartnik's quasi-local mass}\label{section:2}

We recall the definition of Bartnik's quasi-local mass 
proposed by R.~Bartnik~\cite{Bartnik:1989} 
and revised by H.~Bray \cite{Bray:2011} (see also \cite{Bray-Chrusciel:2004}) as follows.
\begin{definition} [\cite{Bray:2011, Bray-Chrusciel:2004}]
Let $(N, h)$ be a complete, asymptotically flat three-manifold with nonnegative scalar curvature.  Let $\Omega\subset N$ be a bounded subset such that $\partial \Omega$ is outer-minimizing in $(N,h)$. Let $\mathcal{PM}$ be the set of complete, asymptotically flat $3$-manifolds $(M, g)$ with nonnegative scalar curvature so that $(\Omega, h)$ isometrically embeds in $(M, g)$ and $\partial \Omega$ is outer-minimizing in $(M, g)$. The Bartnik quasi-local  mass is defined as 
$$ \mB (\Omega, h) =  \inf \left\{   m({M}, g ): ({M}, {g} ) \in \mathcal{PM}\right\},$$ 
where we   recall that $ m(M,  g)$ is the ADM mass of $(M,  g)$.
\end{definition}

In this definition, the outer-minimizing assumption of $\partial \Omega$ is imposed so that the Hawking mass of $\partial \Omega$ 
gives a lower bound of $\mB(\Omega, h)$. This follows from the inverse mean curvature flow argument of Huisken and Ilmanen  in the proof of  the Riemannian positive mass theorem~\cite{Huisken-Ilmanen:2001}.

\begin{example}

Let $(N, h) \in \mathcal{PM}$ be rotationally symmetric such that the scalar curvature of $h$ is identically zero outside  a bounded, rotationally symmetric subset $ \Omega$. 
(Such an $(N, h)$ can be easily constructed by an ODE method.)
By  uniqueness of rotationally symmetric solutions of the vacuum Einstein equations (or simply solving the ODE of the zero scalar curvature equation), $(N \setminus \Omega, h)$ is isometric to an exterior region of a Schwarzschild manifold.
In particular, $ \partial \Omega$ is strictly outer-minimizing in $(N, h)$ and 
 $ m (N, h)$ equals the Hawking mass of $\partial \Omega$. As a result, $ m(N, h)  = \mB(\Omega, h) $. 
Using the well-known facts about Schwarzschild manifolds, we see that this mass minimizer $(N, h)$ admits a static potential $V$ in the exterior region $N\setminus \Omega$, and $V$ approaches  a constant at infinity. 
\qed
\end{example}

Next we show that the above assertion on static potentials holds in general. Namely, if a suitable mass minimizer exists in $\mathcal{PM}$, then the exterior region of the mass minimizer admits a static potential that goes to a constant at infinity.

\begin{thm} \label{thm B}
Let $ \Omega \subset (N, h)$ be a bounded subset where $(N, h) \in  \mathcal{PM}$. Suppose there exists $(M, g) \in \mathcal{PM}$  such that  $ \partial \Omega$ is strictly outer-minimizing in $(M, g)$ and  $ \mB(\Omega, h) =m (M,g )$, 
then $(M\setminus \overline{\Omega}, g)$ admits a static  potential that goes to a constant at infinity. 
\end{thm}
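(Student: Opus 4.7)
The plan is to argue by contradiction: if $(M \setminus \overline{\Omega}, g)$ admits no static potential, construct a competitor in $\mathcal{PM}$ with strictly smaller mass than $m(g) = \mB(\Omega, h)$. Suppose some precompact open set $U$ with $\overline{U} \subset M \setminus \overline{\Omega}$ admits no static potential. By the local surjectivity of the scalar curvature map at non-static metrics \cite{Corvino:2000}, exactly as invoked in the proof of Theorem~\ref{theorem:rigidity}, there is a $C^{2,\alpha}$ metric $\bar{g}$ arbitrarily close to $g$, equal to $g$ outside $U$ (hence $\bar{g}=g$ on a neighborhood of $\overline{\Omega}$), satisfying $R_{\bar{g}} \ge 0$ everywhere and $R_{\bar{g}} > 0$ on a nonempty open subset of $U$.

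Next I would conformally deform the exterior. Let $u$ be the unique positive solution of
\[
\Delta_{\bar{g}} u - \tfrac{1}{8} R_{\bar{g}} u = 0 \text{ on } M \setminus \Omega^{\circ}, \qquad u|_{\partial \Omega} = 1, \qquad u(x) \to 1 \text{ at infinity.}
\]
Since $R_{\bar{g}} \ge 0$ is not identically zero, the strong maximum principle gives $u < 1$ in the interior and the Hopf lemma gives $\partial_\nu u < 0$ on $\partial \Omega$, where $\nu$ points into the exterior. Define a Lipschitz metric $\tilde{g}$ on $M$ by $\tilde{g} = g$ on $\Omega$ and $\tilde{g} = u^4 \bar{g}$ on $M \setminus \Omega^{\circ}$; it is continuous across $\partial \Omega$ because $u|_{\partial \Omega} = 1$, and the conformal formula for mean curvature produces the strict inequality
\[
H^{\mathrm{out}}_{\partial \Omega}(\tilde{g}) = H_{\partial \Omega}(g) + 4\,\partial_\nu u < H^{\mathrm{in}}_{\partial \Omega}(\tilde{g}),
\]
which is precisely the sign rendering the distributional scalar curvature of $\tilde{g}$ non-negative at the corner. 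A standard corner smoothing of $\tilde{g}$ in a tubular neighborhood of $\partial \Omega$, arranged so as to preserve the isometric embedding of $(\Omega, h)$ and the outer-minimizing property of $\partial \Omega$ (with the room afforded by the strict outer-minimizing hypothesis), yields for every small $\epsilon > 0$ a smooth metric $\tilde{g}_\epsilon \in \mathcal{PM}$ with $m(\tilde{g}_\epsilon)$ close to $m(u^4 \bar{g})$. Since $u < 1$ strictly on the exterior, $m(u^4 \bar{g}) < m(\bar{g}) = m(g)$, hence $m(\tilde{g}_\epsilon) < m(g) = \mB(\Omega, h)$ for small $\epsilon$, contradicting the mass-minimizing property. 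The resulting local existence of static potentials on every precompact subset of $M \setminus \overline{\Omega}$ is then promoted to a global static potential $V$ by an exhaustion and diagonal compactness argument exactly as in the proof of Theorem~\ref{theorem:rigidity}.

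For the asymptotic behavior of $V$, Proposition~\ref{proposition:asymptotics} gives that $V$ is either bounded (and hence tends to a constant at infinity) or has linear growth. In the unbounded case, Theorem~\ref{theorem:unbounded} produces a complete non-compact area minimizing hypersurface in $M$, and the rigidity of Carlotto--Chodosh--Eichmair \cite[Theorem 1.6]{Carlotto-Chodosh-Eichmair:2016} forces $(M, g)$ to be isometric to Euclidean space, on which any nonzero constant is a bounded static potential on the exterior. In either case $(M \setminus \overline{\Omega}, g)$ admits a static potential going to a constant at infinity.

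The principal obstacle is the corner-smoothing step for $\tilde{g}$: it must preserve both the isometric embedding of $(\Omega, h)$ and the outer-minimizing condition for $\partial \Omega$ while keeping the ADM mass strictly below $m(g)$. The strict outer-minimizing hypothesis is precisely what provides the slack to absorb this small perturbation and keep the modified manifold inside $\mathcal{PM}$.
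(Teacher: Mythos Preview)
Your overall architecture matches the paper's exactly: obtain a static potential on the exterior, then invoke Proposition~\ref{proposition:asymptotics} and, in the unbounded case, Theorem~\ref{theorem:unbounded} together with the Chodosh--Eichmair rigidity. The only difference is that the paper obtains the static potential by directly citing \cite[Corollary 1.2]{Corvino:2017} (with the strict outer-minimizing hypothesis used to keep competitors in $\mathcal{PM}$), whereas you attempt to reprove that corollary via a bump--conform--smooth construction.

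Your reconstruction has two soft spots. First, the assertion ``since $u<1$ strictly on the exterior, $m(u^4\bar g)<m(\bar g)$'' is not a proof. Writing $u=1+A|x|^{-1}+\cdots$, one has $m(u^4\bar g)=m(\bar g)+2A$, and integrating $\Delta_{\bar g}u=\tfrac18 R_{\bar g}u$ over $M\setminus\Omega$ produces \emph{two} boundary terms of opposite sign (the flux at infinity and the Hopf term on $\partial\Omega$), so $A<0$ does not follow from $u<1$ alone. The clean fix is to introduce the harmonic capacity potential $w$ with $w|_{\partial\Omega}=0$, $w\to 1$ at infinity, and apply Green's identity to $u-1$ and $w$; the boundary contributions on $\partial\Omega$ then vanish and one gets $-4\pi A=\tfrac18\int w\,R_{\bar g}\,u>0$.

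Second, you place the corner exactly at $\partial\Omega$ and then assert that the smoothing can be ``arranged so as to preserve the isometric embedding of $(\Omega,h)$''. Standard corner mollification (Miao-type) perturbs the metric on \emph{both} sides of the hypersurface; the strict outer-minimizing hypothesis buys room for the area condition but does nothing for the isometric embedding. The simple repair is to exploit that $\bar g=g$ on a full neighborhood of $\overline\Omega$: solve the conformal problem on $M\setminus\Omega_\delta$ with $u=1$ on $\partial\Omega_\delta$ for some $\Omega_\delta\Supset\Omega$, so that the corner and its smoothing live entirely in $M\setminus\overline\Omega$ and $(\Omega,h)$ is untouched. With these two adjustments your argument goes through and is effectively a hands-on proof of the cited result of Corvino.
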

\begin{remark}
An analogous result is also obtained by  M.~Anderson and J.~Jauregui \cite[Theorem 1.1]{Anderson-Jauregui:2016} using a different approach. Though, note that their definition of Bartnik's quasi-local mass is slightly different from ours because the minimization in their definition is taken over a large class of asymptotically flat $3$-manifolds. The first named author is very grateful to Jeff Jauregui for kindly explaining their proof. 
\end{remark}

\begin{proof} 
Using the mass minimizing property of $(M,g)$, a recent result of J.~Corvino~ \cite[Corollary 1.2]{Corvino:2017} shows that $(M\setminus \overline{\Omega}, g)$ admits a static potential $V$. (The strictly outer-minimizing assumption on $\partial \Omega$  
guarantees that the competitors produced in \cite[Corollary 1.2]{Corvino:2017} still lie in $\mathcal{PM}$.)
 By asymptotics of static potentials, either $V$ goes to a constant or $V$ is unbounded (see \cite{Beig-Chrusciel:1996} and \cite{Miao-Tam:2015}, or Proposition~\ref{proposition:asymptotics} below). If $V$ goes to a  constant, the claim follows. 

Now we assume that  $V$ is unbounded. Theorem~\ref{theorem:unbounded}  implies  that there is a complete, non-compact, area minimizing surface in $(M, g)$. We then invoke the rigidity result of Chodosh and Eichmair in \cite[Theorem 1.6]{Carlotto-Chodosh-Eichmair:2016} to conclude that $(M, g)$ is isometric to Euclidean space. Then it is obvious that the constant function is a static potential on Euclidean space. 
\end{proof}

\subsection{Geometric properties of a static extension}\label{section:3}

 Given a Riemannian metric $\gamma$ and a  function~$H$ on a $2$-sphere, we say that an asymptotically flat $3$-manifold
$(M, g)$ with boundary $\Sigma=\partial M$ is a \emph{static extension subject to the boundary data $(\gamma, H)$} if 
\begin{enumerate}
\item $\Sigma$ is diffeomorphic to the $2$-sphere, and the induced metric from $g$ on $\Sigma$ is isometric to $\gamma$.
\item The mean curvature of $ \Sigma$ with respect to the unit normal vector on $\Sigma$ pointing into $(M, g)$ is given by $H$.
\item $(M, g)$ admits a bounded static potential.
\end{enumerate}

Below we give the sufficient conditions  on $(\gamma, H)$ so that the static extension has no closed minimal surfaces that locally minimize the area.

\begin{thm}
Suppose the pair $(\gamma, H)$ satisfies 
$$
 H > 0  \ \ \mathrm{and} \ \  K_\gamma \ge  \frac14  H^2, 
$$
 where $K_\gamma$  denotes  the Gauss  curvature of $ \gamma$.
Then any static extension $(M, g)$ subject to the boundary data $(\gamma, H)$ does not have closed, locally area minimizing surfaces.
\end{thm}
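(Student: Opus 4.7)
Plan. Suppose for contradiction that $M$ contains a closed, locally area-minimizing surface $S$. Since $H>0$ on $\Sigma=\partial M$ means $\Sigma$ itself is not minimal, $S$ must lie in the interior of $M$. By Lemma~\ref{lemma:stability}, $S$ is totally geodesic and $V|_S$ is either strictly signed or identically zero. In the main case $V|_S\not\equiv 0$, Proposition~\ref{proposition:splitting} yields a one-sided collar of $S$ foliated by totally geodesic leaves $\Sigma_t$ of area $|S|$, with $V$ constant on each leaf and $Ric_g\equiv 0$ on the collar. Since $\dim M=3$, Ricci-flatness is flatness, so the collar is flat and each closed, totally geodesic, scalar-flat leaf is a flat $2$-torus (using orientability of $S$).

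I would then iterate Proposition~\ref{proposition:splitting} to extend the flat, foliated region maximally in the direction of $\Sigma$. In a flat region, the static equation forces $V$ to be affine in the foliation parameter, so $V$ vanishes on at most one leaf and the foliation continues smoothly through it. Any interior termination of the extension would produce a limiting leaf that is still closed, locally area-minimizing, and totally geodesic with $V$ nonzero on it (by the affine structure); reapplying Proposition~\ref{proposition:splitting} to this limit leaf would extend the flat region further, contradicting maximality. Hence the foliation must reach $\Sigma$. But then $\Sigma$ is a $C^2$ limit of totally geodesic leaves, so $H\equiv 0$ on $\Sigma$---contradicting $H>0$. Topologically the same conclusion is forced, since every leaf is a torus while $\Sigma\cong S^2$.

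The hypotheses $H>0$ and $K_\gamma\geq H^2/4$ enter through the Gauss equation on $\Sigma$, which, using $R_g=0$, reads
\[
  Ric(\nu,\nu)=\tfrac{H^2}{4}-K_\gamma-\tfrac{1}{2}|\mathring A|^2\leq 0,
\]
with equality only at umbilic points. This precisely rules out $\Sigma$ serving as an umbilical round-sphere boundary leaf of a flat, torus-foliated region, sealing the geometric contradiction. The degenerate case $V|_S\equiv 0$ is reduced to the main one by noting that $S$ sits in the smooth totally geodesic zero set $V^{-1}(0)$; assuming $S$ separates $\Sigma$ from infinity, Theorem~\ref{theorem:bounded} applied to the unbounded (horizon-boundary) component of $M\setminus S$ fixes the sign of $V$ on each side, and passing to a nearby regular level set $\{V=c\}$ (on which $V$ is nonzero and which is locally area minimizing by continuity) reduces to the non-degenerate case.

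The main obstacle is rigorously establishing the global extension of the foliation from $S$ all the way to $\Sigma$ without interior degeneration, in particular past possible zero leaves of $V$ and across potential frontiers of the flat region. This is where the rigidity given by flatness plus the affine structure of $V$ is used essentially: together they allow smooth continuation across zero leaves and rule out sudden failures of the foliation in the interior of $M$, forcing the foliation to propagate all the way to $\Sigma$, where the boundary hypotheses finally deliver the contradiction.
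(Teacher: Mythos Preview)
Your approach diverges substantially from the paper's, and it has genuine gaps.

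\textbf{Where the paper goes instead.} The paper uses the boundary hypotheses $H>0$ and $K_\gamma\ge \tfrac14 H^2$ at the very start, not at the end: normalizing $V\to 1$ at infinity, harmonicity forces $\inf_M V$ to occur at a point $y\in\Sigma$, and the identity
\[
\Delta_\Sigma V + H\,\partial_\nu V + \tfrac12\bigl(H^2-|A|^2-2K_\gamma\bigr)V=0
\]
combined with the Hopf lemma and $\tfrac12(H^2-|A|^2-2K_\gamma)\le \tfrac14 H^2-K_\gamma\le 0$ forces $V(y)>0$, hence $V>0$ on all of $M$. Once $V>0$, the metric is analytic; Proposition~\ref{proposition:splitting} makes $g$ Ricci-flat on an open set, analyticity spreads this to all of $M$, and a $3$-dimensional Ricci-flat asymptotically flat manifold is an exterior Euclidean region with no closed minimal surfaces. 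No continuation-of-foliation argument is needed.

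\textbf{The gaps in your plan.} First, your treatment of the degenerate case $V|_S\equiv 0$ is incorrect. Level sets $\{V=c\}$ for $c\neq 0$ are not minimal, let alone locally area minimizing; ``locally area minimizing by continuity'' is not a valid inference. Moreover, applying Theorem~\ref{theorem:bounded} to a component of $M\setminus S$ requires that component to have \emph{horizon boundary}, i.e.\ to contain no closed minimal surfaces besides $S$, which you have not established (and which is essentially what you are trying to prove).

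Second, in the main case, the global extension of the foliation is precisely the hard step, and your sketch does not close it. Proposition~\ref{proposition:splitting} requires $V\neq 0$ on the starting leaf; you cannot reapply it at a leaf where $V$ vanishes, and your ``affine in $t$'' claim presumes you are already inside a flat region. Pushing the foliation toward $\Sigma$ also runs into the topological obstruction you yourself note (torus leaves cannot limit to a sphere), which means the foliation must terminate strictly in the interior; your argument that any such termination can be continued further is circular. The paper sidesteps all of this with analyticity, which in turn rests on $V>0$---and that is exactly what the boundary hypotheses buy you. Your proposal misplaces their role.
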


\begin{proof}
Let $V $ be a bounded static potential on $(M, g)$. By Proposition~\ref{proposition:asymptotics} and normalizing, we may assume $V \to 1 $ at infinity. 

We now use  the argument in \cite[Proposition 3]{Miao:2005} to show that $ V > 0 $ in $M$.
By the static equation and recall $R_g=0$, we have the following identity on $\Sigma$:
\begin{equation}
\begin{split}
0 =  \Delta V 
= & \  \Delta_\Sigma V + H \frac{\partial V}{\partial \nu} + \nabla^2 V (\nu, \nu) \\
= & \  \Delta_\Sigma V + H \frac{\partial V}{\partial \nu} + Ric (\nu, \nu) V ,
\end{split}
\end{equation}
where $\nu$ is the unit normal vector on $ \Sigma$ pointing into $M$. By the Gauss equation,
$$
 Ric (\nu, \nu) = \frac{1}{2} \left( H^2 - | A|^2 - 2 K_\gamma \right) ,
$$
where $ A$ denotes the second fundamental form of $\Sigma$. Combining the above identities gives 
\begin{equation} \label{eq-bdry-static}
\Delta_\Sigma V + H \frac{\partial V}{\partial \nu} + \frac{1}{2} \left( H^2 - | A|^2 - 2 K_\gamma \right) V  = 0 . 
\end{equation} 

Because $ V $ is harmonic in $M$, by maximum principle and  $ V \to 1$ at infinity,  we may assume that $\inf_M V $ occurs on $\Sigma$ and $V$ is not a constant. Otherwise the claim $V>0$ follows easily.  Let $V(y) = \inf_M V$ for some $y\in \Sigma$. Using the Hopf boundary point lemma  and noting $V(y) = \min_\Sigma V$, we have the following inequalities  at $y$,
\begin{align*}
\frac{\partial }{\partial \nu} V (y) > 0 \quad \mbox{ and } \quad \Delta_\Sigma V (y) \ge 0.
\end{align*} 
On the other hand, the assumption on $H$ and $K_\gamma$ implies that 
$$ \frac{1}{2} \left( H^2 - | A|^2 - 2 K_\gamma \right)  \le   \frac14 H^2 - K_\gamma  \le 0,$$
Combing the above inequities and  \eqref{eq-bdry-static}, we conclude that $ V(y) > 0 $ and hence $V>0$ in $M$.

Suppose, to give a contradiction, that there is a closed, locally  area minimizing surface  in $M$. By Proposition \ref{proposition:splitting}, $g$ must be Ricci flat  in an open neighborhood of the minimal surface. Since $ V > 0 $ and $g$ is static, $g$ is analytic on $M$ (cf. \cite{Corvino:2000}). Hence, $(M, g)$ 
has vanishing Ricci curvature. In three dimensions, this implies $(M, g)$ is isometric to an exterior 
region in the Euclidean space, which is free of closed minimal surfaces. It gives a contradiction.
\end{proof}

\appendix
\section{Asymptotically flat manifolds}\label{section:asymptotically-flat}

Let $n\ge 3$. An $n$-dimensional (connected) manifold $(M,g)$ is said to be \emph{asymptotically flat}  if  $M\setminus K = \bigcup_k N_k$ for some compact subset $K\subset M$ and,  for  $q > \frac{n-2}{2}$,  there is a coordinate chart  on each \emph{end} 
\[
N_k \cong \mathbb{R}^n\setminus B_1(0)
\]
so that the components of the metric tensor satisfy 
\[
	|g_{ij} - \delta_{ij}| + |x||\partial_k g_{ij} | + |x|^2 |\partial_k \partial_\ell g_{ij}|  \le C|x|^{-q}.
\]	
We also assume the scalar curvature $R_g$ is integrable in $M$ and $g\in C^{2,\alpha}_\mathrm{loc}(M)$. 

For $r>1$, we let $B_r =  \bigcup_{x\in N_k} \{ |x|\le r\} \bigcup K$ be the closed coordinate ball with respect to the above charts, and let the coordinate sphere $S_r =  \bigcup_{x\in N_k} \{ |x|= r\}$.

Throughout this note, we follow the convention that stable minimal hypersurfaces are two-sided. We say that $M$ has a \emph{horizon boundary} if the boundary $\partial M$, possibly empty, is a disjoint union of smooth closed minimal hypersurfaces, $M$ contains no other closed minimal hypersurfaces, and we further assume that $\partial M$ is locally area minimizing if $n\ge 8$. (Note that if $3\le n \le 7$, $\partial M$ is area minimizing,  implied by  other two conditions.)

A complete non-compact hypersurface $\Sigma$ in $M$  is said to be \emph{area minimizing} if $\Sigma\cap B_r$ is a Plateau solution with the boundary spanning $\Sigma \cap S_r$ for all $r$ sufficiently large. 

We define the ADM mass of $(M, g)$ by 
\[
	m= \frac{1}{2(n-1)\omega_{n-1}} \lim_{r\to \infty} \int_{|x|=r}\sum_{i,j=1}^n (g_{ij,i}-g_{ii,j})\nu^j \, d\sigma_0,
\]
where $d\sigma_0$ is the $(n-1)$-volume measure induced from ambient Euclidean metric. We may write $m(M,g)$ to emphasize the dependence on the asymptotically flat manifold $(M, g)$.

\section{Static potential} \label{section:static}

Let $(\Omega, g)$ be an $n$-dimensional Riemannian manifold. Let $L_g^*: H^2_\mathrm{loc}(\Omega) \to L^2_{\mathrm{loc}}(\Omega)$ be a differential operator defined by 
\[
	L_g^*V = -(\Delta V)g + \nabla^2 V - V Ric,
\]	
where $\nabla^2 $ is the Hessian operator and $Ric$ is the Ricci tensor of $g$. A \emph{static potential} $V$ is a scalar valued function on $\Omega$ that satisfies $L_g^*V=0$ and is not identically zero. The equation $L_g^*V=0$ is equivalent  to the following equation
\begin{align} \label{equation:static}
	\nabla^2 V = \left(Ric - \frac{1}{n-1} R_g g\right) V.
\end{align}
By elliptic regularity, if $g\in C^{k,\alpha}_{\mathrm{loc}}$ for some $k\ge 2$, then a static potential $V$ is $C^{k,\alpha}_{\mathrm{loc}}(\Omega)$, and $V\in C^{k-2,\alpha}(\overline{\Omega})$ if $\overline{\Omega}$ is bounded (see, e.g. \cite[Proposition 2.1]{Corvino-Huang:2017} and  letting $X=0$ there). We say that $(\Omega, g)$  \emph{admits a static potential} if there is a static potential $V$ defined on $\Omega$.

\begin{lemmaA}[{\cite[Proposition 2.3 and Proposition 2.6]{Corvino:2000}}]\label{lemma:properties}
Let $(\Omega, g)$ be a connected manifold admitting a static potential $V$. Then
\begin{enumerate}
\item The scalar curvature of $g$ is constant on $\Omega$.
\item The zero set of $V$ is a  totally geodesic regular hypersurface in $\Omega$. 
\end{enumerate}
\end{lemmaA}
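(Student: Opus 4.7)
The plan is to establish (1) by taking the divergence of the static equation and invoking the contracted second Bianchi identity, and to establish (2) by a geodesic-ODE unique-continuation argument for regularity followed by a second-fundamental-form computation for total geodesy.

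For (1), I first rewrite the static equation in the equivalent form $\nabla^2 V = V\bigl(Ric - \tfrac{R_g}{n-1} g\bigr)$, obtained by tracing $-(\Delta V)g + \nabla^2 V - V Ric = 0$ to get $(n-1)\Delta V = -V R_g$ and substituting back. Taking the divergence of both sides and commuting derivatives on the left gives $\nabla^i \nabla_i \nabla_j V = \nabla_j \Delta V + R_{jk}\nabla^k V$, while on the right the contracted second Bianchi identity $\nabla^i R_{ij} = \tfrac12 \nabla_j R_g$ produces $R_{ij} \nabla^i V - \tfrac{R_g}{n-1}\nabla_j V + \tfrac{n-3}{2(n-1)}\, V\, \nabla_j R_g$. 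Using $\Delta V = -\tfrac{V R_g}{n-1}$ to expand $\nabla_j \Delta V$, the $R_{ij}\nabla^i V$ and $R_g\nabla_j V$ terms cancel, leaving
\[
(n-1)\, V\, \nabla_j R_g = 0.
\]
So $\nabla R_g = 0$ on the open set $\{V\ne 0\}$. Since $V$ satisfies the linear elliptic equation $\Delta V + \tfrac{R_g}{n-1} V = 0$ with smooth coefficients, Aronszajn's unique continuation theorem implies $\{V=0\}$ has empty interior; thus $\nabla R_g$ vanishes on a dense subset of $\Omega$, and by continuity $R_g$ is constant on the connected manifold $\Omega$.

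For (2), I first show $\nabla V\ne 0$ on $Z := V^{-1}(0)$. Suppose $V(p)=0$ and $\nabla V(p)=0$ at some $p\in\Omega$. For any unit $w\in T_p\Omega$ and geodesic $\gamma(t)=\exp_p(tw)$, the function $f(t) := V(\gamma(t))$ satisfies
\[
f''(t) = f(t)\,\bigl(Ric - \tfrac{R_g}{n-1} g\bigr)(\dot\gamma,\dot\gamma),
\]
a linear second-order ODE with $f(0)=f'(0)=0$, so $f\equiv 0$ by uniqueness. Thus $V$ vanishes on a normal neighborhood of $p$; the set where both $V$ and $\nabla V$ vanish is then simultaneously open and closed in $\Omega$, forcing $V\equiv 0$ by connectedness, a contradiction. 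Hence $\nabla V\ne 0$ on $Z$, and the implicit function theorem makes $Z$ a smooth regular hypersurface.

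Finally, to show $Z$ is totally geodesic, let $\nu = \nabla V/|\nabla V|$ along $Z$. For vector fields $X,Y$ tangent to $Z$, using $Y(V)=0$ on $Z$ and the definition of the Hessian,
\[
\mathrm{II}(X,Y) = \langle \nabla_X Y, \nu\rangle = -\frac{\nabla^2 V(X,Y)}{|\nabla V|}.
\]
The trace-adjusted form gives $\nabla^2 V\equiv 0$ on $Z$, hence $\mathrm{II}\equiv 0$. The principal obstacle is the unique-continuation step in (2); the divergence identity in (1) and the total geodesy conclusion are essentially mechanical consequences of the trace-adjusted static equation.
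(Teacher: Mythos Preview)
The paper does not supply its own proof of this lemma; it merely records the statement with a citation to \cite{Corvino:2000}. Your argument is essentially the standard one found there and is correct: for (1), take the divergence of the static equation, apply the commutation formula $\nabla^i\nabla_i\nabla_j V=\nabla_j\Delta V+R_{jk}\nabla^k V$ together with the contracted Bianchi identity to obtain $V\,dR_g=0$, then use density of $\{V\neq 0\}$; for (2), run the geodesic ODE $f''=\bigl(Ric-\tfrac{R_g}{n-1}g\bigr)(\dot\gamma,\dot\gamma)\,f$ to rule out simultaneous vanishing of $V$ and $\nabla V$, and read off $\mathrm{II}=0$ from $\nabla^2 V=0$ on the zero set.

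Two minor remarks. First, the coefficient in your displayed identity is off: carrying out the cancellation carefully yields $\tfrac12\,V\,\nabla_j R_g=0$ rather than $(n-1)\,V\,\nabla_j R_g=0$, though of course the conclusion is unaffected. Second, your appeal to Aronszajn in (1) is redundant: the geodesic ODE argument you give in (2) already shows that $\{V=0\}$ has empty interior, since if $V$ vanished on an open set then $V$ and $\nabla V$ would both vanish at an interior point, forcing $V\equiv 0$ by the open--closed argument you wrote down.
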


A static potential appears to be the only obstruction to promoting scalar curvature locally. The following statement is a special case of {\cite[Theorem 1]{Corvino:2000}}. Also see, e.g. \cite{Qing-Yuan:2016}.
\begin{theoremA}[See {\cite[Theorem 1]{Corvino:2000}}]\label{theorem:deformation}
Let $\Omega$ is a bounded open subset of a Riemannian manifold $(M, g)$ and $g\in C^{4,\alpha}(\overline{\Omega})$. Suppose $\partial \Omega$ is smooth.  If $\Omega$ does not admit a static potential, then there is a metric $\bar{g}\in C^{2,\alpha}_{\mathrm{loc}}(M)$ so that  $\bar{g} = g$ outside $\Omega$  and $R(\bar{g}) > R(g)$ in $\Omega$. 
\end{theoremA}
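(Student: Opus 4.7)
The plan is to follow Corvino's adjoint trick. I would seek the deformed metric in the form $\bar{g} = g + L_g^* u$ for some scalar function $u$ supported in $\overline{\Omega}$, which turns the nonlinear prescribed-scalar-curvature problem into
\[
\Phi(u) := R(g + L_g^* u) - R(g) = f
\]
for a scalar function $f$ that I will choose strictly positive on $\Omega$. A direct computation gives the linearization $D\Phi(0) = L_g \circ L_g^* =: \mathcal{P}$, a fourth-order, formally self-adjoint, elliptic operator on $\Omega$, and the full map $\Phi$ is smooth between suitable Banach spaces.

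The crux is to show that $\mathcal{P}$ is an isomorphism between appropriately chosen Banach spaces. I would work in weighted H\"older spaces on $\Omega$ whose weights are tuned so that (a) the extension by zero of $h = L_g^* u$ across $\partial\Omega$ lies in $C^{2,\alpha}_{\mathrm{loc}}(M)$, and (b) $\mathcal{P}$ is Fredholm of index zero with the prescribed boundary vanishing of $u$ and enough of its derivatives. Since $\Omega$ is bounded with smooth boundary and $g\in C^{4,\alpha}(\overline{\Omega})$, standard elliptic theory applies in these spaces. Triviality of the kernel then follows directly from the hypothesis: if $\mathcal{P} u = 0$ with the prescribed boundary conditions, pairing against $u$ and integrating by parts yields $\int_\Omega |L_g^* u|^2\, d\mu = 0$, hence $L_g^* u \equiv 0$. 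Unless $u \equiv 0$, this would make $u$ a static potential on $\Omega$, contradicting the assumption. Self-adjointness combined with the trivial kernel gives the desired isomorphism.

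With the isomorphism in hand, the inverse function theorem applied to $\Phi$ produces a unique small solution $u$ of $\Phi(u) = f$ whenever $\|f\|$ is sufficiently small. I would then take $f$ to be a small positive multiple of a smooth bump function that vanishes to the required order at $\partial\Omega$ while remaining strictly positive in $\Omega$. The resulting metric $\bar{g} = g + L_g^* u$ then coincides with $g$ outside $\Omega$, since $L_g^* u$ extends by zero there, and satisfies $R(\bar{g}) = R(g) + f > R(g)$ throughout $\Omega$. The main technical obstacle I anticipate is the function-space bookkeeping: the weights must simultaneously enforce enough boundary vanishing for the zero extension to belong to $C^{2,\alpha}_{\mathrm{loc}}(M)$, make the integration-by-parts identity rigorous, and yield Fredholm self-adjointness of $\mathcal{P}$ in a matched primal/dual pair. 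Once this setup is carried out correctly (as in Corvino's original paper), the remainder of the proof is a routine application of the implicit function theorem.
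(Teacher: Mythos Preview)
The paper does not give its own proof of this statement; it simply records it as a special case of \cite[Theorem 1]{Corvino:2000} and cites Corvino's original argument. Your outline is precisely Corvino's method (deform by $h = L_g^* u$, invert $L_g L_g^*$ in weighted spaces, apply the implicit function theorem), so there is nothing to compare --- you have reproduced the argument the paper defers to.
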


To analyze the asymptotics of a  static potential, we need the following ODE lemma. In this paper, we only apply the case that $Z(t)$ is real-valued, but for other future applications toward the system of Einstein constraint equations, we include the following general statement.

\begin{lemmaA}\label{lemma:ode}
Let $n\ge 1$. Let $Z: [1,\infty)\to \mathbb{R}^k$  be a $C^2$ vector valued function satisfying the differential equation
\[
	Z''(t) = A(t) Z' + B(t) Z(t), 
\]
where $A(t), B(t)$ are continuous $k\times k$ matrix functions on $[1,\infty)$ satisfying $|A(t)| + t|B(t)| \le C_1t^{-1-q}$ for some constants $C_1>0$ and $q> 0$. Then $|Z| + t|Z'|\le C_2t$ where $C_2$ depends only on $C_1$ and $Z(1), Z'(1)$. Furthermore, if  $Z$ vanishes to infinite order at infinity, i.e., for each $N>0$ there is a constant $c_N$ such that  $|Z(t)|\le c_N t^{-N}$ on $[1, \infty)$,  then $Z$ is identically zero on $[1,\infty)$.
\end{lemmaA}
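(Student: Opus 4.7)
The plan is to pass to the logarithmic variable $\tau = \log t \in [0, \infty)$. Using $\frac{d}{d\tau} = t \frac{d}{dt}$ one finds $Z_\tau = tZ'$ and $Z_{\tau\tau} - Z_\tau = t^2 Z''$, so the ODE becomes
\[
	Z_{\tau\tau} = \bigl( I + \tilde A(\tau) \bigr) Z_\tau + \tilde B(\tau) Z ,
\]
where $\tilde A(\tau) := tA(t)$ and $\tilde B(\tau) := t^2 B(t)$ satisfy $|\tilde A(\tau)| + |\tilde B(\tau)| \le 2 C_1 e^{-q\tau}$ and are therefore integrable on $[0, \infty)$. Viewed as a first-order system in $W := (Z, Z_\tau)^T$, this reads $W_\tau = (M_0 + E(\tau))\, W$ with
\[
	M_0 = \begin{pmatrix} 0 & I \\ 0 & I \end{pmatrix} \qquad \text{and} \qquad |E(\tau)| \le C_1' e^{-q\tau}.
\]
The unperturbed matrix $M_0$ has eigenvalues $0$ (eigenspace $\{(v, 0)\}$) and $1$ (eigenspace $\{(v, v)\}$) and is diagonalizable; this is exactly what will deliver the sharp linear growth rate in the first claim.

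For the growth bound, I would conjugate by $P = \bigl( \begin{smallmatrix} I & I \\ 0 & I \end{smallmatrix} \bigr)$ and set $V := P^{-1} W = (Z - Z_\tau,\, Z_\tau)^T$. This transforms the system into $V_\tau = (D + \tilde E(\tau)) V$ with $D = \operatorname{diag}(0, I)$ and $|\tilde E(\tau)| = O(e^{-q\tau})$. Since $\langle V, DV \rangle = |V_2|^2 \le |V|^2$, the energy $|V|^2$ satisfies $\frac{d}{d\tau} |V|^2 \le (2 + 2|\tilde E(\tau)|) |V|^2$, and Gronwall gives $|V(\tau)| \le C e^\tau$ with $C$ depending only on $C_1$, $Z(1)$, $Z'(1)$. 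Translating back, $|Z - tZ'| + t|Z'| \le Ct$, and the triangle inequality yields $|Z| + t|Z'| \le C_2 t$.

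For the vanishing claim, I would first upgrade the decay of $Z'$ to match that of $Z$. The growth bound plus the equation gives $|Z''(t)| \le C t^{-1-q}$, which is integrable, so $Z'(\infty) := \lim_{t\to\infty} Z'(t)$ exists; since $Z(t) \to 0$, a mean-value argument forces $Z'(\infty) = 0$. Writing $Z'(t) = -\int_t^\infty Z''(s)\, ds$, the contribution $\int_t^\infty |A(s)||Z'(s)|\, ds$ carries a small factor $t^{-q}/q$ that for $t$ large can be absorbed back into $\sup_{s \ge t} |Z'(s)|$, while $\int_t^\infty |B(s)||Z(s)|\, ds \le C_N t^{-N-1-q}$ for every $N$ by hypothesis. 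This yields $|Z'(t)| \le C_N' t^{-N}$ for every $N$; in $\tau$-coordinates, $\Psi(\tau) := |Z|^2 + |Z_\tau|^2$ then decays faster than $e^{-N\tau}$ for every $N$. A routine estimate from the ODE gives $|\Psi'(\tau)| \le C \Psi(\tau)$ on $[0, \infty)$ with $C$ depending only on $C_1$, so backwards Gronwall yields $\Psi(\tau_0) \le \Psi(T) e^{C(T - \tau_0)}$ for all $\tau_0 \le T$. Choosing $N > C$ and sending $T \to \infty$ forces $\Psi(\tau_0) = 0$, and therefore $Z \equiv 0$ on $[1, \infty)$.

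The main obstacle is obtaining the sharp linear rate in the first claim: the naive energy $|Z|^2 + t^2 |Z'|^2$ in the original variable only yields $|Z| + t|Z'| \le C t^{3/2}$, because the symmetric part of the first-order matrix for $(Z, tZ')$ has top eigenvalue $3/2$. The change of variable $\tau = \log t$ combined with the Jordan-type diagonalization of $M_0$ is what isolates the exponents $0$ and $1$ and produces the sharp bound. The same change of variable also makes the backwards Gronwall close in the second claim, since polynomial decay of every order in $t$ becomes exponential decay of every rate in $\tau$, which is precisely what is needed to overpower the Gronwall factor $e^{CT}$.
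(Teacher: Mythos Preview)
Your argument is correct and takes a genuinely different route from the paper. The paper works directly with the energy $h(t) = t^2|Z'|^2 + |Z|^2$ in the original variable, obtains the two-sided differential inequality $|h'| \le \tfrac{2a}{t} h$ with $2a = 3(1+C_1)$, and deduces $h(1)t^{-2a} \le h(t) \le h(1)t^{2a}$. The lower bound already handles the infinite-order vanishing claim (modulo the same upgrade of $Z'$ that you carry out explicitly; this step is not spelled out in the paper). The upper bound is then sharpened to $h(t) \le C t^2$ by a bootstrap: feed the crude bound on $h$ into the equation to control $Z''$, integrate to improve $Z'$ and $Z$, reinsert into $h$, and iterate until the exponent drops to~$2$. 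Your approach instead passes to $\tau = \log t$, observes that the leading system is constant with eigenvalues $0$ and $1$, and diagonalizes so that a single Gronwall step yields the sharp rate $e^\tau = t$ directly, with no iteration. The trade-off is clear: the paper's method is more elementary (one energy, one inequality) but pays for it with a bootstrap; yours is more structural (a Levinson-type reduction) and reaches the sharp bound at once, at the cost of the extra change of variable and the separate work to upgrade the decay of $Z'$ before the backward Gronwall. Note incidentally that your function $\Psi = |Z|^2 + |Z_\tau|^2$ is exactly the paper's $h$, so your backward Gronwall inequality $|\Psi_\tau| \le C\Psi$ and the paper's $|h'| \le \tfrac{2a}{t} h$ are the same estimate in different coordinates.
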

\begin{proof}
Define the function $h = t^2 |Z'|^2 + |Z|^2\ge 0$. Applying  uniqueness for ODE, if $h(t)=0$ for some value of $t$, then $h$ is identically zero on $[1,\infty)$, so we may assume that $h>0$ everywhere. Compute
\begin{align*}
	h' = 2t |Z'|^2 + 2t^2 Z'\cdot  Z'' + 2Z\cdot Z'.
\end{align*}
Using the equation for $Z''$ and the bound on the coefficients, we obtain that $|h'| \le \frac{3(1+C_1)}{t} h$. Denote by $2a= 3(1+C_1)$. Solving the differential inequality yields
\[
	h(1) t^{-2a}\le h(t) \le h(1) t^{2a}.
\]
The lower bound implies that any nontrivial solution $Z$ cannot vanish to infinite order at infinity. 

The differential equation of $Z''$  implies that 
\[
	|Z''(t)| \le t^{-1}\sqrt{2h(t)} (|A(t)| + t|B(t)|).
\]
Hence, $|Z''(t)| \le C_1 \sqrt{2h(1)} t^{-2-q+a}$. By integration,  we have
\begin{align*}
	|Z'(t)| &\le |Z'(1)| + t \max_{[1,t]}|Z''| \le |Z'(1)| + C_1 \sqrt{2h(1)} t^{-1-q+a}\\
	|Z(t) | &\le |Z(1) | + t \max_{[1,t]}|Z'| \le |Z(1)| + t| Z'(1)| + C_1 \sqrt{2h(1)} t^{-q+a}
\end{align*}
and inserting these into the definition of $h(t)$ we find
\[
	h(t)\le 3(Z(1))^2 + 5t^2 |Z'(1)|^2 + 7C_1^2 h(1) t^{-2q+2a}.
\]   
It implies that the growth rate of $h$ can be further improved by a bootstrap argument, until the highest power of $t$ is quadratic. Thus,  $|Z| + t|Z'|\le C_2t$ for some constant $C_2$ depending only on $C_1, Z(1), Z'(1)$.

\end{proof}

Most of the statement in Proposition~\ref{proposition:asymptotics} below is known and can be found in \cite[Appendix C]{Beig-Chrusciel:1996} and \cite{Miao-Tam:2015}. We include the statement and the arguments here because it seems that the estimate \eqref{equation:finite}  below used in the proof of Theorem~\ref{theorem:rigidity} is not explicitly stated in the literature. 

\begin{propositionA}\label{proposition:asymptotics}
Let $(M, g)$ be an $n$-dimensional asymptotically flat manifold. Let $N$ be one of the ends. Suppose $N\cap ( B_{r_1}\setminus B_{r_0})$ admits a static potential $V$ for some $1 < r_0 < r_1$.  Then $V$ is at most of linear growth, in the sense that there is a constant $C$, depending only on $\max_{S_{r_0}} (|V| + |\nabla V|)$ and $(M, g)$, such that for each $r \in (r_0, r_1)$ 
\begin{align} \label{equation:finite}
	\sup_{B_{r}\setminus B_{r_0}} |V|\le Cr.
\end{align}
Furthermore, if $V$ is defined  on all of $N$, then either one of the following properties holds on the end $N$:
\begin{enumerate}
\item $V$ is identically zero.
\item $V = \sum_{i=1}^n a_i x^i + O^{2,\alpha}(1+|x|^{1-q}\log|x|)$ for some constants $a_1,\dots, a_n$, not all zero.
\item  $V = a_0 - a_0 m|x|^{2-n} + O^{2,\alpha}(|x|^{ 1-n} + |x|^{2-n-q}\log|x|)$, where $a_0$ is a nonzero constant and $m$ is the ADM mass of the end $N$. 
\end{enumerate}
\end{propositionA}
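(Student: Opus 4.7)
Plan: The proposition has two parts — the linear-growth bound \eqref{equation:finite} and the asymptotic trichotomy — and I would prove them in that order, with the first feeding into the second. The central tool is Lemma~\ref{lemma:ode}.

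For the linear-growth bound, the idea is to convert the static equation \eqref{equation:static} into a vector ODE along radial rays and apply Lemma~\ref{lemma:ode}. By Lemma~\ref{lemma:properties}, the scalar curvature $R_g$ is constant on the connected annulus where $V$ is defined; combined with asymptotic flatness (so $R_g\to 0$ near infinity), this constant must equal zero. In the asymptotically flat coordinates the Hessian form of the static equation reads
\[
\partial_i\partial_j V \;=\; \Gamma^k_{ij}\,\partial_k V \;+\; \left(Ric_{ij}-\frac{R_g}{n-1}g_{ij}\right) V,
\]
where $|\Gamma^k_{ij}| = O(|x|^{-1-q})$ and $|Ric_{ij}| = O(|x|^{-2-q})$. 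Fixing a unit direction $\omega$ and setting $Z(t) = (V(t\omega),\, t\,\nabla V(t\omega))$ along the ray $t\mapsto t\omega$, the Hessian equation repackages into a second-order vector ODE $Z'' = A(t) Z' + B(t) Z$ with $|A(t)| + t|B(t)| \le Ct^{-1-q}$. Lemma~\ref{lemma:ode} yields $|Z(t)| \le C_2\, t$ with $C_2$ depending only on $(M,g)$ and $\max_{S_{r_0}}(|V|+|\nabla V|)$; taking the supremum over $\omega$ gives \eqref{equation:finite}.

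For the trichotomy, suppose $V$ is defined on all of $N$; then $R_g\equiv 0$ and $\Delta_g V = 0$. Decompose $V(r,\omega) = \sum_{\ell\ge 0} V_\ell(r)\, Y_\ell(\omega)$ via Euclidean spherical harmonics in the coordinate chart. Each radial factor $V_\ell$ satisfies a perturbation of the Euclidean radial harmonic equation whose homogeneous solutions are $r^\ell$ and $r^{2-n-\ell}$; the perturbative source (from $g-\delta$, $Ric$, $\Gamma$, and mode-coupling via non-Euclidean metric components) is integrable against these solutions thanks to the linear-growth bound from the previous step. A variation-of-parameters analysis then yields an asymptotic expansion for each mode: the modes $\ell\ge 2$ grow faster than linearly in the homogeneous part and are therefore forced to vanish, while logarithmic factors $|x|^{1-q}\log|x|$ and $|x|^{2-n-q}\log|x|$ arise as resonances. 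The trichotomy then sorts itself out according to whether all modes vanish (case 1), the $\ell=1$ modes are not all zero (case 2), or only the constant $\ell = 0$ mode is nonzero (case 3). In case (3), the coefficient $-a_0 m$ of $|x|^{2-n}$ is identified by integrating $\Delta_g V = 0$ over a large coordinate ball, applying the divergence theorem, and matching the resulting boundary integrand on $S_r$ with the ADM-mass integrand as $r\to\infty$.

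The main obstacle I anticipate is the variation-of-parameters bookkeeping: the correction $g-\delta$ couples different spherical harmonic modes in a non-trivial way, and one must verify that this coupling produces no terms larger than those claimed in (2)--(3). A secondary delicate point is the identification $c = -a_0 m$ in case (3), which requires carefully tracking error terms across the divergence-theorem computation. The linear-growth bound from the first step is the essential input that makes the whole perturbative scheme convergent.
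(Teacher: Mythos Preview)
Your treatment of the linear-growth bound is essentially the paper's: reduce the Hessian form of \eqref{equation:static} to a radial ODE and invoke Lemma~\ref{lemma:ode}. The paper works with the scalar $t\mapsto V(t\omega)$ rather than your vector $(V,\,t\nabla V)$, but either packaging is fine once one notices that the Christoffel term in $V_{rr}$ drags in tangential derivatives of $V$, which must be controlled simultaneously.

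For the trichotomy your route diverges from the paper's. The paper does not carry out a spherical-harmonic variation-of-parameters argument; instead, having established linear growth and $\Delta_g V=0$, it simply quotes the harmonic-expansion theorems of Bartnik~\cite{Bartnik:1986} and Meyers~\cite{Meyers:1963} to obtain the forms (2) and (3) directly. For the coefficient identification $b=-a_0 m$ in case (3), the paper does not integrate $\Delta_g V=0$ over a ball; it instead computes $\nabla^2 V$ explicitly from the expansion, uses the static equation $\nabla^2 V=Ric\cdot V$, and reads off $b$ via the alternative \emph{Ricci-based} ADM mass formula $m=\frac{1}{(n-1)(2-n)\omega_{n-1}}\lim_{r\to\infty}\int_{S_r}R_{ij}x^i\nu^j\,d\sigma$ (cf.~\cite{Miao-Tam:2016,Huang:2010}). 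Your plan essentially reproves the cited expansion results from scratch, which is valid but heavier; the paper's route sidesteps the mode-coupling bookkeeping you correctly flag as the main obstacle.

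There is one genuine gap in your outline: case (1). In your decomposition, ``all modes vanish'' only means the leading coefficients $a_0,\dots,a_n$ are zero; it does not yet force $V\equiv 0$, since the remainder terms could still be nontrivial. The paper closes this by bootstrapping: once all $a_i=0$, one feeds the improved decay back into \eqref{equation:static} iteratively to conclude that $V$ vanishes to \emph{infinite order} at infinity, and then invokes the uniqueness clause of Lemma~\ref{lemma:ode} (infinite-order vanishing implies $Z\equiv 0$) along each radial ray. You should add this step.
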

\begin{proof}
We compute  with respect to the polar coordinate chart of $\{x\}$ 
\begin{align*}
	V_r &= \sum_i \frac{x^i}{r}\frac{\partial V}{\partial x^i}  \\
	V_{rr} &=\sum_{i,j}  \frac{\partial^2 V}{\partial x^i \partial x^j} \frac{x^ix^j}{r^2} =\sum_{i,j} \nabla^2 V(\partial {x^i}, \partial {x^j})\frac{x^ix^j}{r^2} + \sum_{i,j,k} \Gamma_{ij}^k  \frac{x^ix^j}{r^2} \frac{\partial }{\partial x^k} V.
\end{align*}
By the static equation,  $V$ satisfies a differential equation of the form in Lemma~\ref{lemma:ode}  along each fixed angular direction, and thus  $|V| + r|V_r| + r^{2+q} |V_{rr} |\le Cr$ where $C$ depends only on the asymptotically flat metric $g$ and the values of $V, V_r$ on $S_{r_0}$. By compactness of $S_{r_0}$, the constant $C$ can be chosen uniformly among the angular directions. Therefore, we have
\[
	|V| + |x| |\partial V| + |x|^{2+q} |\partial^2 V| \le C|x|, 
\]
which, in particular, proves the first assertion. 

From now on, we assume that $V$ is defined on $N$. Since $V$ is harmonic, by the growth rate bound and harmonic expansions (e.g.  \cite[Theorem 1.17]{Bartnik:1986} and \cite{Meyers:1963}), $V$ is asymptotic to a harmonic function of homogeneous degree at most one: 
\[
	V(x) = \sum_i a_i x^i + O^{2,\alpha}(1+|x|^{1-q}\log|x|)
\]
for some constants $a_1,\dots, a_n$. If the constants are all zero, then again by the harmonic expansion,  there are constants $a_0, b$ such that $V(x) = a_0 + b|x|^{2-n} + O^{2,\alpha}(|x|^{1-n} + |x|^{2-n-q} \log|x|)$. Compute
\[
	 (\nabla^2 V)_{ij} = \frac{\partial^2 V}{\partial x^i \partial x^j} - \sum_k \Gamma_{ij}^k \frac{\partial V}{\partial x^k} = -\frac{(n-2) b\delta_{ij}}{|x|^n} + \frac{n(n-2) b x^i x^j}{|x|^{n+2}} + O^{0,\alpha}(|x|^{-1-n} + |x|^{-n-q}\log|x|).
\]
Using the alternative definition of the ADM mass (see, e.g. \cite{Miao-Tam:2016} and also \cite[Equation 1.4]{Huang:2010} for $n=3$)  and the static equation 
\[
	a_0 m =\frac{1}{(n-1)(2-n)\omega_{n-1}}\lim_{r\to \infty} \int_{\{ |x|=r\}\cap E} V R_{ij} x^i \frac{x^j}{|x|} \, d\sigma = -b.
\]
It gives the desired expansion.

If $a_0, a_1,\dots, a_n$ are all zero, then $V$ goes to zero at infinity. Applying the static equation  and bootstrapping yields that $V$ vanishes to infinite order at infinity. Applying Lemma~\ref{lemma:ode} to  the differential equation of $V$ along $r$  implies that $V$ is identically zero. 

\end{proof}

  \begin{lemmaA}\label{lemma:zero}
Let $(M, g)$ be an $n$-dimensional asymptotically flat manifold. Let $N$ be one of the ends and suppose $N$ admits a static potential $V$ with the asymptotics $V (x)= x^n + o(|x|)$ as $|x|\to \infty$.  Then there is $r_0>0$  large such that each component $\Sigma$ of $V^{-1}(0)$ in  $N\setminus B_{r_0}$ is given by a graph  $x^n = f(x^1, \dots, x^{n-1})$ and  $\Sigma$ intersects $ S_r$ transversely in a nearly equatorial $(n-2)$ sphere for $r>r_0$. 
  \end{lemmaA}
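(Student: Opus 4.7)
The plan is to combine the refined asymptotic expansion of $V$ from Proposition~\ref{proposition:asymptotics} with two applications of the implicit function theorem.

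First I would match the hypothesis $V(x)=x^n+o(|x|)$ against the trichotomy in Proposition~\ref{proposition:asymptotics}: case~(1) is ruled out because $V$ is nonzero, and case~(3) gives a bounded $V$ which contradicts the linear growth in $x^n$. Hence we are in case~(2) with $a_i=\delta_{in}$, so
\[
V(x) = x^n + O^{2,\alpha}\bigl(1+|x|^{1-q}\log|x|\bigr).
\]
In particular $\partial_n V(x) = 1+O(|x|^{-q}\log|x|)$ and $\partial_i V(x) = O(|x|^{-q}\log|x|)$ for $i<n$. Fix a small $\epsilon>0$ and choose $r_0$ large so that on the end $N$ and for $|x|\ge r_0$,
\[
|V(x)-x^n|\le \epsilon|x|,\qquad \partial_n V\ge \tfrac12,\qquad |\partial_i V|\le \epsilon\ \text{for}\ i<n.
\]

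Second, the first inequality forces $|x^n|\le 2\epsilon|x|$ at any zero of $V$ with $|x|\ge r_0$, so $V^{-1}(0)\cap(N\setminus B_{r_0})$ is confined to an equatorial slab. Writing $y=(x^1,\ldots,x^{n-1})$, the map $t\mapsto V(y,t)$ is strictly increasing on each vertical segment through the slab (by $\partial_n V\ge \tfrac12$), negative at $t=-2\epsilon|y|$ and positive at $t=2\epsilon|y|$ once $|y|$ is large enough (by $|V-x^n|\le \epsilon|x|$). The implicit function theorem then produces a smooth function $f$ with $V(y,f(y))=0$, uniquely determined within the slab. Since the slab contains all of $V^{-1}(0)\cap(N\setminus B_{r_0})$, every component coincides with the graph of $f$ over its domain in $\mathbb{R}^{n-1}$.

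Third, for each $r>r_0$ the intersection $\Sigma\cap S_r$ is transverse: on $\Sigma$ one has $\nabla V=e_n+o(1)$ while the outward radial unit vector has $n$-th component $x^n/|x|=o(1)$, so $\nabla V$ is not parallel to the radial direction. In the graph parameterization the intersection is $\{(y,f(y)) : |y|^2+f(y)^2=r^2\}$, and since $|f(y)|\ll|y|$, a final application of the implicit function theorem in the radial direction of $\mathbb{R}^{n-1}$ expresses this set as a $C^1$-small graph $|y|=\rho(\omega)$ over $\omega\in S^{n-2}$, hence a nearly equatorial $(n-2)$-sphere.

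The main obstacle is only the careful bookkeeping of the smallness constants and thresholds $r_0,\epsilon$; there is no deep technical content beyond the quantitative control on $V$ and its first derivatives provided by Proposition~\ref{proposition:asymptotics}.
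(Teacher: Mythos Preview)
Your proposal is correct and follows essentially the same line as the paper's proof: invoke Proposition~\ref{proposition:asymptotics} to upgrade the hypothesis to the expansion $V=x^n+O^{2,\alpha}(1+|x|^{1-q}\log|x|)$, deduce $\partial_n V\approx 1$ for large $|x|$, apply the implicit function theorem to write the zero set as a graph $x^n=f(x')$, and then read off the transversality with coordinate spheres. The paper's version is more compressed (it records the quantitative bounds $|\nabla f|\le C|x'|^{\gamma-1}$ and $|f|\le C|x'|^{\gamma}$ rather than your slab argument), but the content is the same.
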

 \begin{proof}
By the previous proposition, we have $\nabla_{\partial x^n} V = 1 + O(|x|^{\gamma-1}) > 0$ for $|x|$ large,  where $ \max\{ 1-q, 0\} < \gamma < 1$. Let $x' = (x^1, \dots, x^{n-1})$.  Then by  the implicit function theorem, each  component of the zero set is given by a graph $x^n = f(x')$ with $|\nabla f|\le C|x'|^{\gamma-1} $. Then $V(x', f(x'))=0$ implies that $|f(x')| \le C|x'|^\gamma$. The constant $C$ above can be chosen uniform for all components. If $r_0$ sufficiently large, each component of $V^{-1}(0)$ intersects $S_{r}$ transversely near the equator, for all $r> r_0$. 
 \end{proof}

We include the following monotonicity formula of G. Galloway~\cite{Galloway:1993}, which is a key geometric ingredient in the proofs of Theorem~\ref{theorem:bounded} and Theorem~\ref{theorem:unbounded}.  Let $(\Omega, g)$ be an $n$-dimensional Riemannian manifold that admits a static potential. Let $\Sigma$ be a two-sided smooth hypersurface in $\Omega$. If $V>0$ in $\Omega$, let $\Phi: \Sigma \times [0, \epsilon) \to \Omega$ be the normal exponential map with respect to the conformally modified metric $\bar{g} = V^{-2}g$. In particular,  $\Phi(x, 0) = x$ and
\[
	\left.\frac{\partial }{\partial t} \Phi(x, t) \right|_{t=0} = V(x) \nu(x),
\]
where $\nu$ is the unit normal vector in the metric $g$. Let $\Sigma_t = \Phi (\Sigma \times \{ t \})$ and let $H(x,t), A(x,t)$ be the mean curvature and second fundamental form of $x\in \Sigma_t$ with respect to $\nu$ in the metric~$g$. 
\begin{lemmaA}[{Monotonicity formula~\cite[Lemma 3]{Galloway:1993}, see also \cite[Proposition 3.2]{Brendle:2013}}]  \label{lemma:monotonicity}
The mean curvature and second fundamental form of $\Sigma_t$ satisfy the following differential equality
\[
	\frac{d}{dt}\left(\frac{H}{V} \right)= |A|^2.
\]
\end{lemmaA}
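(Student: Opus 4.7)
The plan is to derive the identity by a direct computation combining the first variation of mean curvature under a normal deformation with the static equation for $V$.

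First, I would translate the $\bar g = V^{-2} g$ geodesic flow into an ordinary $g$-variation of the hypersurfaces $\Sigma_t$. Since a $g$-unit vector has $\bar g$-length $V^{-1}$, the $\bar g$-unit normal field along $\Sigma_t$ equals $V\nu$, and by the Gauss lemma in the metric $\bar g$ the tangent vector of $t \mapsto \Phi(x,t)$ is exactly this $\bar g$-unit normal at each time. Hence $\Sigma_t$ is deformed in $(\Omega, g)$ by the variation vector field $V\nu$, and in particular the derivative of the ambient function $V$ along the flow is $\partial_t V = (V\nu)(V) = V\nabla_\nu V$.

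Second, I would apply the standard first variation of mean curvature under a normal deformation with speed $V$ (in the sign convention consistent with the rest of the paper),
\[
\partial_t H \;=\; \Delta_\Sigma V + \bigl(|A|^2 + \mathrm{Ric}(\nu,\nu)\bigr) V.
\]
The $|A|^2$ term already appears here; the goal is to show that every other contribution to $\tfrac{d}{dt}(H/V)$ cancels. To this end, I would use the static equation \eqref{equation:static} rewritten as $\nabla^2 V = (\Delta V)\, g + V\,\mathrm{Ric}$, together with the adapted-frame decomposition of the ambient Laplacian,
\[
\Delta V \;=\; \Delta_\Sigma V + \nabla^2 V(\nu,\nu) - H\,\nabla_\nu V.
\]
Evaluating the static equation on $(\nu,\nu)$ and substituting produces the pointwise hypersurface identity $V\,\mathrm{Ric}(\nu,\nu) = -\Delta_\Sigma V + H\,\nabla_\nu V$.

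Finally, I would expand
\[
\frac{d}{dt}\frac{H}{V} \;=\; \frac{\partial_t H}{V} - \frac{H\,\partial_t V}{V^2},
\]
substitute the formulas above for $\partial_t H$, $\partial_t V$, and $\mathrm{Ric}(\nu,\nu)$, and observe that the $\Delta_\Sigma V / V$ terms and the $H\,\nabla_\nu V / V$ terms cancel pairwise, leaving exactly $|A|^2$. The only real obstacle is bookkeeping: the sign of $H$ in the first variation formula must be chosen consistently with the sign appearing in the adapted-frame Laplacian decomposition and with the convention (used in the proof of Proposition~\ref{proposition:splitting}) under which the first variation of area reads $\frac{d}{dt}|\Sigma_t| = -\int_{\Sigma_t} V H\, d\sigma$. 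Once these conventions are fixed, the identity is a short algebraic manipulation.
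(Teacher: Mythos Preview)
Your argument is correct and is precisely the standard derivation: combine the first variation of mean curvature $\partial_t H=\Delta_\Sigma V+(|A|^2+\mathrm{Ric}(\nu,\nu))V$ with the consequence $V\,\mathrm{Ric}(\nu,\nu)=-\Delta_\Sigma V+H\nabla_\nu V$ of the static equation (obtained by evaluating $\nabla^2V=(\Delta V)g+V\,\mathrm{Ric}$ on $(\nu,\nu)$ and using the adapted-frame Laplacian decomposition), and observe the cancellations in the quotient rule. Note that the paper itself does not prove this lemma; it simply records the identity and refers to Galloway~\cite[Lemma~3]{Galloway:1993} and Brendle~\cite[Proposition~3.2]{Brendle:2013}, whose arguments are essentially the one you have written out. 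Your remark about sign conventions is well taken: with the paper's convention $\tfrac{d}{dt}|\Sigma_t|=-\int_{\Sigma_t}VH\,d\sigma$ one has $H=-\mathrm{div}_\Sigma\nu$, so the decomposition reads $\Delta V=\Delta_\Sigma V+\nabla^2V(\nu,\nu)-H\nabla_\nu V$ and the first variation of $H$ carries the sign you wrote; with these choices the cancellation goes through exactly as stated.
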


\bibliographystyle{amsplain}
\bibliography{2017}
\end{document}